\DeclareMathOperator\id{id}
\DeclareMathOperator\Sym{Sym}
\DeclareMathOperator\Hom{Hom}
\DeclareMathOperator\pt{pt}
\DeclareMathOperator\Stab{Stab}
\DeclareMathOperator\Gr{Gr}
\newcommand{\bK}{\mathsf{K}}
\newcommand{\bG}{\mathsf{G}}
\def\Z{{\mathbb Z}}
\def\P{{\mathbb P}}
\def\R{{\mathbb R}}
\def\C{{\mathbb C}}
\newcommand{\bT}{\mathsf{T}}
\newcommand{\Or}{\textsf{O}}
\def\Oc{{\mathcal O}}
\newcommand{\sE}{\mathscr{E}}
\newcommand{\cE}{\mathscr{E}}
\DeclareMathOperator\Pic{Pic}
\def\ttt{{\boldsymbol t}}
\def\zz{{\boldsymbol z}}
\def\mub{{\boldsymbol\mu}}
\newcommand{\Spec}{\operatorname{Spec}}
\newcommand{\Ell}{\operatorname{Ell}}
\newcommand{\cV}{\mathcal{V}}
\let\thi\vartheta
\let\pho\phi
\numberwithin{equation}{section}
\newtheorem{Theorem}{Theorem}[section]
\newtheorem{Corollary}[Theorem]{Corollary}
\newtheorem{Lemma}[Theorem]{Lemma}
\newtheorem{Proposition}[Theorem]{Proposition}
 { \theoremstyle{definition}
\newtheorem{Definition}[Theorem]{Definition}
\newtheorem{Note}[Theorem]{Note}
\newtheorem{Example}[Theorem]{Example}
\newtheorem{Remark}[Theorem]{Remark} }
\begin{document}

\allowdisplaybreaks

\newcommand{\arXivNumber}{1906.00134}

\renewcommand{\thefootnote}{}

\renewcommand{\PaperNumber}{093}

\FirstPageHeading

\ShortArticleName{Mirror Self-Symmetry of the Cotangent Bundle of the Full Flag Variety}

\ArticleName{Three-Dimensional Mirror Self-Symmetry \\ of the Cotangent Bundle of the Full Flag Variety\footnote{This paper is a~contribution to the Special Issue on Representation Theory and Integrable Systems in honor of Vitaly Tarasov on the 60th birthday and Alexander Varchenko on the 70th birthday. The full collection is available at \href{https://www.emis.de/journals/SIGMA/Tarasov-Varchenko.html}{https://www.emis.de/journals/SIGMA/Tarasov-Varchenko.html}}}

\Author{Rich\'ard RIM\'ANYI~$^{\dag^1\!}$, Andrey SMIRNOV~$^{\dag^1\dag^2\!}$, Alexander VARCHENKO~$^{\dag^1\dag^3\!}$ and Zijun ZHOU~$^{\dag^4}$}
\AuthorNameForHeading{R.~Rim\'anyi, A.~Smirnov, A.~Varchenko and Z.~Zhou}
\Address{$^{\dag^1}$~Department of Mathematics, University of North Carolina at Chapel Hill, \\
\hphantom{$^{\dag^1}$}~Chapel Hill, NC 27599-3250, USA}
\EmailDD{\href{mailto:rimanyi@email.unc.edu}{rimanyi@email.unc.edu}, \href{mailto:asmirnov@email.unc.edu}{asmirnov@email.unc.edu}, \href{mailto:anv@email.unc.edu}{anv@email.unc.edu}}
\Address{$^{\dag^2}$~Institute for Problems of Information Transmission, \\
\hphantom{$^{\dag^2}$}~Bolshoy Karetny 19, Moscow 127994, Russia}
\Address{$^{\dag^3}$~Faculty of Mathematics and Mechanics, Lomonosov Moscow State University, \\
\hphantom{$^{\dag^3}$}~Leninskiye Gory 1, 119991 Moscow GSP-1, Russia}
\Address{$^{\dag^4}$~Department of Mathematics, Stanford University, \\
\hphantom{$^{\dag^4}$}~450 Serra Mall, Stanford, CA 94305, USA}
\EmailDD{\href{mailto:zz2224@stanford.edu}{zz2224@stanford.edu}}

\ArticleDates{Received July 08, 2019, in final form November 18, 2019; Published online November 28, 2019}

\Abstract{Let $X$ be a holomorphic symplectic variety with a torus $\mathsf{T}$ action and a finite fixed point set of cardinality $k$. We assume that elliptic stable envelope exists for $X$. Let $A_{I,J}= \operatorname{Stab}(J)|_{I}$ be the $k\times k$ matrix of restrictions of the elliptic stable envelopes of~$X$ to the fixed points. The entries of this matrix are theta-functions of two groups of variables: the K\"ahler parameters and equivariant parameters of~$X$. We say that two such varieties~$X$ and~$X'$ are related by the 3d mirror symmetry if the fixed point sets of~$X$ and~$X'$ have the same cardinality and can be identified so that the restriction matrix of~$X$ becomes equal to the restriction matrix of~$X'$ after transposition and interchanging the equivariant and K\"ahler parameters of~$X$, respectively, with the K\"ahler and equivariant parameters of~$X'$. The first examples of pairs of 3d symmetric varieties were constructed in [Rim\'anyi~R., Smirnov~A., Varchenko~A., Zhou~Z., arXiv:1902.03677], where the cotangent bundle $T^*\operatorname{Gr}(k,n)$ to a~Grassmannian is proved to be a 3d mirror to a Nakajima quiver variety of $A_{n-1}$-type. In this paper we prove that the cotangent bundle of the full flag variety is 3d mirror self-symmetric. That statement in particular leads to nontrivial theta-function identities.}

\Keywords{equivariant elliptic cohomology; elliptic stable envelope; 3d mirror symmetry}

\Classification{17B37; 55N34; 32C35; 55R40}

\renewcommand{\thefootnote}{\arabic{footnote}}
\setcounter{footnote}{0}

\section{Introduction}

\subsection{The 3d mirror symmetry}

The 3d mirror symmetry has recently received plenty of attention in both representation theory and mathematical physics. It was introduced by various groups of physicists in \cite{BDG, BDGH, PhysMir2, PhysMir1, Ga-Wit, HW, PhysMir3}, where one starts with a pair of 3d $\mathcal{N} = 4$ supersymmetric gauge theories, considered as mirror to each other. Under the mirror symmetry, the two interesting components -- \emph{Higgs branch} and \emph{Coulomb branch} -- of the moduli spaces of vacua are interchanged, as well as the Fayet--Iliopoulos parameters and mass parameters.

Translated into the mathematical language, the $\mathcal{N} = 4$ supersymmetry indicates a~hyperk\"ahler structure on the moduli space. In particular, for the theories we are interested in, the Higgs branch $X$ is a variety which can be constructed as a hyperk\"ahler quotient, or equivalently in the algebraic setting, as a holomorphic symplectic quotient. As a large class of examples, Nakajima quiver varieties arise in this way, as Higgs branches of $\mathcal{N}=4$ supersymmetric quiver gauge theories. The mass parameters arise here as equivariant parameters of a certain torus $\bT$ acting naturally on the Higgs branch $X$. The Fayet--Iliopoulos parameters, or K\"ahler parameters arise as coordinates on the torus $\bK=\mathrm{Pic}(X)\otimes_{\Z} \C^{\times}$.

The ``dual'' symplectic varieties $X'$ -- Coulomb branches, however, did not admit a mathematical construction until recently,
see \cite{BFN2, NakCoulomb1, Coulomb1}, where the Coulomb branches are defined as singular affine schemes by taking spectrums of certain convolution algebras, and quantized by considering noncommutative structures. Nevertheless, in many special cases, Coulomb branches admit nice resolutions, and can be identified with the Higgs branches of the mirror theory. These cases include hypertoric varieties, cotangent bundles of partial flag varieties, the Hilbert scheme of points on $\C^2$ and more generally, moduli spaces of instantons on the minimal resolution of~$A_n$ singularities. 3d mirror symmetry is often referred to as \emph{symplectic duality} in mathematics, see references in~\cite{BLPW, matdu}.

Aganagic and Okounkov in \cite{AOelliptic} argue that the equivariant elliptic cohomology and the theory of elliptic stable envelopes provide a natural framework to study the 3d mirror symmetry (See also the very important talk ``Enumerative symplectic duality'' given by A.~Okounkov during the 2018 MSRI workshop ``Structures in Enumerative Geometry'').
In particular, they argue that the elliptic stable envelopes of a symplectic variety depend on both equivariant and K\"ahler parameters in a symmetric way. Motivated by~\cite{AOelliptic} we give the following definition of 3d mirror symmetric pairs of symplectic varieties~$X$ and~$X'$.

Let a symplectic variety $X$ be endowed with a Hamiltonian action of a torus $\bT$. Let the set~$X^{\bT}$ of torus fixed points be a~finite set of cordiality~$k$. For $I\in X^{\bT}$ let $\Stab(I)$ be the elliptic stable envelope of~$I$.\footnote{For the generality in which elliptic stable envelope can be defined see \cite[Chapter~3]{MO}. The existence of these classes is proven for $X$ given by Nakajima varieties and hypertoric varieties. It is expected, however, that elliptic stable envelopes exist for more general symplectic varieties.} It is a class in elliptic cohomology of~$X$. The restrictions of these elliptic cohomology classes to points of~$X^{\bT}$ give a $k\times k$ matrix $A_{I,J}= \Stab(I) |_{J}$. The matrix elements~$A_{I,J}$ are theta functions of two sets of variables associated with~$X$: the equivariant parameters, which are coordinates on the torus~$\bT$, and the K\"ahler parameters, which are coordinates on the torus $\bK=\mathrm{Pic}(X)\otimes_{\Z} \C^{\times}$.

Let $X$ and $X'$ be two such symplectic varieties.

\begin{Definition} \label{maindefin}
 A variety $X'$ is a 3d mirror of a variety $X$ if
 \begin{enumerate}\itemsep=0pt
 \item[(1)] There exists a bijection of fixed point sets $X^{\bT}\rightarrow (X')^{\bT'}$, $I\mapsto I'$.
 \item[(2)] There exists an isomorphism
 \begin{gather*}
 \kappa\colon \ \bT \rightarrow \bK', \qquad \bK \rightarrow \bT'
 \end{gather*}
 identifying the equivariant and K\"ahler parameters of $X$ with, respectively, K\"ahler and equivariant parameters of $X'$.
 \item[(3)] The matrices of restrictions of elliptic stable envelopes for $X$ and $X'$ coincide after transposition (when the set of fixed points are identified by (1)) and change of variables~(2):
 \begin{gather} \label{acoin}
 A_{I,J}=\kappa^{*}( A'_{J',I'}),
 \end{gather}
 where $A'_{J',I'}$ denotes the restriction matrix of elliptic stable envelopes for $X'$.
 \end{enumerate}
\end{Definition}

The first examples of pairs of 3d symmetric varieties were constructed in~\cite{RVSZ}, where the cotangent bundle $T^*\Gr(k,n)$ to a Grassmannian is proved to be
a 3d mirror of a Nakajima quiver variety of $A_{n-1}$-type. In this paper we prove that the cotangent bundle of the full flag variety is 3d mirror self-symmetric.

That statement in particular leads to nontrivial theta-function identities. The left and right-hand sides of equation~(\ref{acoin}) are given as sums of alternating products of Jacobi theta functions in two groups of variables. Equality~(\ref{acoin}) provides $k^2$ highly nontrivial identities satisfied by Jacobi theta functions. In Section~\ref{exsection} we describe some of these identities in detail.

Alternatively, one could define 3d mirror variety $X'$ as a variety which has the same $K$-theoretic vertex functions (after the corresponding change of the equivariant and K\"ahler para\-me\-ters). The vertex functions of $X$ are the $K$-theoretic analogues of the Givental's J-functions introduced in~\cite{pcmilec}. For the cotangent bundles of full flag varieties the vertex functions were studied for example in \cite{Korot1,Kor2,Push2}. We believe that this alternative definition is equivalent to the one we give above.

\subsection{Elliptic stable envelopes: main results}

The notion of stable envelopes is introduced by Maulik--Okounkov in \cite{MO} to study the quantum cohomology of Nakajima quiver varieties. Stable envelopes depend on a choice of a cocharacter of the torus $\bT$. The Lie algebra of the torus admits a wall-and-chamber structure, such that the transition matrices between stable envelopes for different chambers turn out to be certain $R$-matrices satisfying the Yang--Baxter equations, and hence they define quantum group structures. In~\cite{pcmilec, OS}, the construction is generalized to K-theory, realizing the representations of quantum affine algebras. What appears new in K-theoretic stable envelopes is the piecewise linear dependence on a choice of \emph{slope}, which lives in the space of K\"ahler parameters.

The slope dependence is replaced by the meromorphic dependence on a complex K\"ahler parameters $\mu \in \bK$ (in the original paper~\cite{AOelliptic} the K\"ahler parameters are denoted by $z$), in the further generalization of stable envelopes to equivariant elliptic cohomology, from which the cohomological and K-theoretic analogs can be obtained as certain limits. Now the elliptic stable envelopes depend on both equivariant and K\"ahler parameters, which makes the 3d mirror symmetry phenomenon possible.

In this paper, we will consider the special case where $X$ is the cotangent bundle of the variety of complete flags in $\C^n$, which can be constructed as the Nakajima quiver variety associated to the $A_{n-1}$-quiver with dimension vector $(1, 2, \dots, n-2, n-1)$ and framing vector $(0, 0, \dots, 0, n)$. There is a torus action induced by the torus~$\bT$ on the framing space $\C^n$. Fixed points $X^\bT$ can be identified with permutations of the ordered set $(1,2, \dots, n)$, and hence parameterized by the symmetric group~$\mathfrak{S}_n$.

Let $q\in \C^*$ be a complex number with $|q|<1$, and $E = \C^* / q^{\Z}$ be the elliptic curve with modular parameter $q$. By definition, the extended equivariant elliptic cohomology $\textsf{E}_\bT(X)$ of $X$ fits into the following diagram
\begin{gather} \label{incdia}\begin{split}&
\xymatrix{
 \widehat \Or_I \ar@{^{(}->}[r] & \textsf{E}_\bT(X) \ar[d] \ar@{^{(}->}[r] & \mathsf{S}(X) \times \sE_\bT \times \sE_{\Pic(X)} \\
 & \sE_\bT \times \sE_{\Pic(X)} ,
}\end{split}
\end{gather}
where $\mathsf{S}(X)=\prod\limits_{k=1}^{n-1} S^k E$ is the space of Chern roots, $\sE_\bT$ and $\sE_{\Pic (X)}$ are the spaces of equivariant and K\"ahler parameters respectively, and $\widehat\Or_I$ is an irreducible component of $\textsf{E}_\bT(X)$, associated with the fixed point $I$, called an orbit appearing in the following decomposition given by the localization
\begin{gather*}
\textsf{E}_{\bT}(X)=\bigg(\coprod\limits_{I\in X^{\bT}} \widehat\Or_{I}\bigg) /\Delta.
\end{gather*}
Here each $\widehat\Or_I$ is isomorphic to the base $\sE_\bT \times \sE_{\Pic (X)}$, and $\Delta$ denotes the gluing data.

Moreover, in our case $X$ is a \emph{GKM variety}, which by definition means that it admits finitely many $\bT$-fixed points and finitely many 1-dimensional orbits, and implies that $\textsf{E}_\bT(X)$ above is a~simple normal crossing union of the orbits $\widehat\Or_I$, along hyperplanes that can be explicitly described.

The dual variety of $X$ is another copy of the cotangent bundle of complete flag variety, which we denote by $X'$, in order to distinguish it from $X$. From the perspective of the 3d mirror symmetry, although $X$ and $X'$ are isomorphic as varieties, we do not identify them in this naive way. Instead, we consider the sets of fixed points of $X$ and $X'$ which are both parameterized by permutations $I \in \mathfrak{S}_n$, and define a natural bijection between the fixed points as
\begin{gather*}
{\sf bj}\colon \ X^\bT \xrightarrow{\sim} (X')^{\bT'}, \qquad I \mapsto I^{-1},
\end{gather*}
where $I^{-1}$ denotes the permutation inverse to $I$. Moreover, we also identify the base spaces of parameters in a nontrivial way
\begin{gather} \label{kap}
\kappa\colon \ \cE_{{\rm Pic}(X)}\cong \cE_{\bT^{'}}, \qquad \cE_{{\rm Pic}(X^{'})}\cong \cE_{\bT},
\\
\hphantom{\kappa\colon}{} \ \mu'_i \mapsto z_i, \qquad z'_i \mapsto \mu_i, \qquad \hbar' \mapsto \hbar.\nonumber
\end{gather}

By definition, given a fixed point $I\in X^\bT$, and a chosen cocharacter $\sigma$ of $\bT$, the elliptic stable envelope $\Stab_\sigma (I)$ is the section of a certain line bundle $\mathcal{T} (I)$ on $\textsf{E}_\bT (X)$, uniquely determined by a set of axioms. Moreover, explicit formulas for this sections, in terms of theta functions, can be obtained via abelianization. We will be interested in their restrictions to orbits $\Stab (I)|_{\hat{\Or}_{J}}$, and the normalized version ${\bf Stab}(I)|_{\hat{\Or}_{J}}$.

Our main result will be the following identity of the normalized restriction matrices of elliptic stable envelopes, for $X$ and $X'$.

\begin{Theorem} \label{introthm} Let $I,J \in X^{\bT}$ be fixed points and $I^{-1}$, $J^{-1}$ be the corresponding fixed points on the dual variety. Then
 \begin{gather} \label{relenv}
 {\bf Stab}(I)|_{\widehat{\Or}_{J}}=\kappa^{*}\big( {\bf Stab}'\big(J^{-1}\big)\big|_{\widehat{\Or}'_{I^{-1}}}\big).
 \end{gather}
\end{Theorem}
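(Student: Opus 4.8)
The plan is to obtain both sides of \eqref{relenv} as explicit products of theta functions via abelianization, and then to exhibit a direct bijection between the factors. First I would recall the abelianization formula for $\mathbf{Stab}(I)|_{\widehat{\Or}_J}$ for $X = T^*\mathcal{F}l_n$: since $X$ is a Nakajima quiver variety for the $A_{n-1}$ quiver with dimension vector $(1,2,\dots,n-1)$, the stable envelope restricted to a fixed point $J$ is an alternating sum (over the Weyl group acting on the Chern roots) of products of theta functions in the equivariant parameters $z_1,\dots,z_n$, the K\"ahler parameters $\mu_1,\dots,\mu_{n-1}$ (or rather their successive ratios, one per quiver node), and $\hbar$; the normalization $\mathbf{Stab}$ divides by the restriction $\mathbf{Stab}(J)|_{\widehat{\Or}_J}$, which kills the sum and leaves a single rational-in-theta expression indexed by the pair $(I,J)$, i.e.\ by the relative position of the two permutations. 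The first substantive step is to write this expression down cleanly: organize it as a product over the $\binom{n}{2}$ "boxes" (pairs $a<b$ in $\{1,\dots,n\}$), where each box contributes a theta-function factor whose argument is a monomial in $z_a/z_b$ and $\hbar$, with the choice of factor (numerator vs.\ denominator, and which power of $\hbar$) dictated by how $I$ and $J$ order the pair $(a,b)$ and by the attractive/repelling chamber condition.

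Next I would do the same for the dual side: $\mathbf{Stab}'(J^{-1})|_{\widehat{\Or}'_{I^{-1}}}$ is the analogous product for $X'$, another copy of $T^*\mathcal{F}l_n$, now with equivariant parameters $z'_i$ and K\"ahler parameters $\mu'_i$. Applying $\kappa^*$ from \eqref{kap} turns $z'_i \mapsto \mu_i$, $\mu'_i \mapsto z_i$, $\hbar' \mapsto \hbar$. So the right-hand side becomes a product over boxes of $X'$, i.e.\ over pairs in $\{1,\dots,n\}$, of theta factors now built from ratios of $\mu$'s and $\hbar$, with the numerator/denominator choices governed by how $J^{-1}$ and $I^{-1}$ order each pair. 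The key combinatorial lemma to prove is that the box data match: a box $(a<b)$ for $X$ corresponds to a box for $X'$ under the transposition $I \leftrightarrow J$, $J^{-1} \leftrightarrow I^{-1}$ in such a way that the "$z$-type" theta factors on the left are precisely the "$\mu$-type" theta factors on the right and vice versa. Concretely: the relation $a <_I b$ versus $a <_J b$ for a pair $(a,b)$ on the $X$-side translates, under $I \mapsto I^{-1}$, into a statement about whether $I^{-1}$ and $J^{-1}$ place certain indices in a given order, which is exactly the data indexing the $X'$-factors; this is the permutation-combinatorics heart of "mirror symmetry interchanges $z$ and $\mu$." I expect to verify it by tracking a single pair through both formulas, using the identity $\mathfrak{S}_n \ni I \mapsto I^{-1}$ swaps the roles of positions and values, which is what causes Chern-root variables (attached to quiver nodes, hence to K\"ahler parameters after abelianization) to trade places with equivariant variables.

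The reductions above leave a purely analytic check: after the bijection of factors, each side is a product of Jacobi theta functions $\thi(\,\cdot\,)$, and one must confirm the arguments agree on the nose, up to the elementary quasi-periodicity and oddness relations $\thi(x^{-1}) = -\thi(x)$ and $\thi(qx) = -q^{-1/2}x^{-1}\thi(x)$ (which are exactly the ambiguities absorbed into the normalization and the line bundle $\mathcal{T}(I)$). Here one also uses that both $X$ and $X'$ are GKM, so $\mathsf{E}_\bT(X)$ is a simple normal crossing union of the orbits $\widehat{\Or}_I$ along explicit hyperplanes, and a section is determined by its restrictions to the orbits together with matching along the crossing hyperplanes; thus it suffices to check \eqref{relenv} orbit-by-orbit, which is what the box-product comparison does. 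The main obstacle, I anticipate, is bookkeeping: getting the chamber conventions, the shift by $\hbar$ in the polarization, and the normalization factor $\mathbf{Stab}(J)|_{\widehat{\Or}_J}$ to line up so that the transposition-plus-$\kappa^*$ symmetry is manifest rather than merely true after a page of theta-function juggling. A secondary subtlety is handling the boundary/diagonal cases (pairs where $I$ and $J$ agree, or the fixed-point-to-itself restriction) so that the normalized classes are genuinely regular and the identity is an equality of honest sections, not just of meromorphic ones. Once the factor-by-factor dictionary is set up, though, the proof should close by direct inspection, and specializing \eqref{relenv} to $\widehat{\Or}_J = \widehat{\Or}'_{I^{-1}}$-type pairs yields the advertised nontrivial theta-function identities.
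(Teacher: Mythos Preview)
Your proposal contains a genuine gap at its first substantive step. You assert that the normalization ``divides by the restriction $\mathbf{Stab}(J)|_{\widehat{\Or}_J}$, which kills the sum and leaves a single rational-in-theta expression'' that then ``organize[s] as a product over the $\binom{n}{2}$ boxes.'' This is false for $n\geq 3$. The restriction $A_{I,J}=W_I(\zz_J,\zz,\hbar,\mub)$ is defined via a symmetrization over the Chern roots $\ttt^{(k)}$, and evaluating at a fixed point $J\neq I$ does \emph{not} collapse that sum to a single term; the diagonal $A_{J,J}$ is indeed a product (this is Lemma~\ref{hollem}), but dividing a sum by a product does not yield a product. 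The paper's own worked example in Section~\ref{exsection} makes this explicit: $A_{(3,2,1),(1,2,3)}$ is a two-term sum, and the identity it must satisfy is equivalent to the four-term theta identity, not to a factor-by-factor matching. More generally, the paper stresses throughout that equation~\eqref{relenv} encodes $k^2$ \emph{nontrivial} Jacobi theta-function identities; a proof by direct bijection of factors would render them trivial, which they are not.

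The paper's actual proof proceeds quite differently, by a recursive characterization. The restriction matrix $A_{I,J}(\zz,\mub)$ is lower triangular in the dominance order with explicit diagonal~\eqref{diagonal}. It satisfies the $R$-matrix relations~\eqref{rmatrel} in the equivariant parameters, and these relations together with the diagonal determine $A$ uniquely (Lemma~\ref{reclem}). The key external input is Theorem~\ref{duthm} (the Bott--Samelson recursion of Rim\'anyi--Weber): $A$ also satisfies the \emph{dual} $R$-matrix relations~\eqref{dualrel} in the K\"ahler parameters, and these likewise determine $A$ uniquely from the diagonal (Lemma~\ref{dulem}). Proposition~\ref{prop1} then shows that the transformation $A\mapsto B$ given by transposition, $I\mapsto \sigma_0\cdot I^{-1}$, and the swap $z_i\mapsto \mu_i^{-1}$, $\mu_i\mapsto z_{\sigma_0(i)}$ converts the $R$-matrix relations into the dual ones. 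Since the diagonals also match (Lemma~\ref{pprop}), uniqueness gives Theorem~\ref{duthrm}, which is exactly equation~\eqref{relenv} unwound through the definitions of $\mathbf{Stab}$, $\mathbf{Stab}'$, and $\kappa$. If you want to salvage your approach, the place to aim is not a closed product formula for $A_{I,J}$ but rather this pair of recursions and their interchange under $\kappa$.
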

Here $\kappa\colon \widehat{\Or}_{J} \rightarrow \widehat{\Or}'_{I^{-1}}$ is the isomorphism (\ref{kap}) and the equality (\ref{relenv}) means that the corresponding sections coincide after this change of variables.

Moreover, by the Fourier--Mukai philosophy, a natural idea originally from Aganagic--Okoun\-kov~\cite{AOelliptic} is to enhance the coincidence above to the existence of a universal \emph{duality interface}\footnote{In the previous paper \cite{RVSZ}, it is called the \emph{Mother function}. } on the product $X \times X'$. Consider the following diagram of embeddings
\begin{gather*}
X\times \{ J \} \stackrel{i_{J}}{\longrightarrow} X\times X' \stackrel{i_{I}}{\longleftarrow} \{I\}\times X'.
\end{gather*}
Theorem \ref{introthm} can then be rephrased as
\begin{Theorem}
 There exists a holomorphic section ${\mathfrak{m}}$ $($the duality interface$)$ of a certain line bundle on ${\rm Ell}_{\bT\times\bT'}(X\times X')$ such that
 \begin{gather*}
 (i^{*}_{J})^*({\mathfrak{m}})= {\bf Stab}(I), \qquad (i^{*}_{I})^*({\mathfrak{m}})= {\bf Stab}' (J),
 \end{gather*}
 where $I$ is a fixed point on $X$ and $J$ is the corresponding fixed point on $X'$ $($i.e., $J=I^{-1}$ as a~permutation$)$.
\end{Theorem}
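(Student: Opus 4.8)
The plan is to deduce this from Theorem~\ref{introthm} by a Fourier--Mukai-style gluing on the scheme $\Ell_{\bT\times\bT'}(X\times X')$. The first step is to unpack that scheme: by the Künneth isomorphism $\Ell_{\bT\times\bT'}(X\times X')\cong \Ell_{\bT}(X)\times\Ell_{\bT'}(X')$, and since both $X$ and $X'$ are GKM varieties, each factor is a simple normal crossing union of its orbit components $\widehat\Or_K$, $\widehat\Or'_L$ (each isomorphic to its base of parameters), glued along the explicitly known hyperplanes attached to the $1$-dimensional torus orbits. Hence the product is the simple normal crossing union of the components $\widehat\Or_K\times\widehat\Or'_L$ indexed by pairs $(K,L)\in\mathfrak S_n\times\mathfrak S_n$, and one can try to build the section $\mathfrak m$ by prescribing it on each component and checking compatibility along each wall.

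Step two is the section itself. On $\widehat\Or_K\times\widehat\Or'_L$ I would set $\mathfrak m|_{\widehat\Or_K\times\widehat\Or'_L}:={\bf Stab}(L^{-1})|_{\widehat\Or_K}$, which by Theorem~\ref{introthm} (applied with $I=L^{-1}$, $J=K$) equals $\kappa^{*}\big({\bf Stab}'(K^{-1})|_{\widehat\Or'_L}\big)$. The existence of these two equal expressions for one local section is exactly what forces the gluing: along a wall in the $K$-directions, where $\widehat\Or_K$ and $\widehat\Or_{K'}$ meet (with $K'$ obtained from $K$ by the transposition of the corresponding $1$-dimensional $\bT$-orbit), the pieces ${\bf Stab}(L^{-1})|_{\widehat\Or_K}$ and ${\bf Stab}(L^{-1})|_{\widehat\Or_{K'}}$ agree automatically, because ${\bf Stab}(L^{-1})$ is already a global section on $\Ell_{\bT}(X)$; dually, the second expression takes care of the walls in the $L$-directions via the fact that ${\bf Stab}'(K^{-1})$ is a global section on $\Ell_{\bT'}(X')$. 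Thus the local pieces patch to a single section $\mathfrak m$, holomorphic because each normalized restriction ${\bf Stab}(L^{-1})|_{\widehat\Or_K}$ is holomorphic by its construction via abelianization. The two claimed identities then follow by bookkeeping: restricting $\mathfrak m$ to $X\times\{I^{-1}\}$ freezes $L=I^{-1}$ and yields ${\bf Stab}(I)|_{\widehat\Or_K}$ on each $\widehat\Or_K$, i.e.\ ${\bf Stab}(I)$ after gluing; restricting to $\{I\}\times X'$ freezes $K=I$ and yields $\kappa^{*}\big({\bf Stab}'(I^{-1})\big)$.

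The remaining point — and the one I expect to be the real work — is the line bundle $\mathcal L$ on $\Ell_{\bT\times\bT'}(X\times X')$ of which $\mathfrak m$ is to be a section. One has to produce a \emph{single} $\mathcal L$ whose pullback along $X\times\{J\}\hookrightarrow X\times X'$ is the normalization-twisted line bundle $\mathcal T(I)$ carrying ${\bf Stab}(I)$, and whose pullback along $\{I\}\times X'\hookrightarrow X\times X'$ is the corresponding line bundle $\mathcal T'(J)$ for ${\bf Stab}'(J)$. I would describe $\mathcal L$ through its divisor on each component $\widehat\Or_K\times\widehat\Or'_L$ — a combination of the explicit theta-divisors built from the tautological bundles of $X$ and of $X'$ together with the normalization factors — and then verify that these divisors are compatible across every gluing hyperplane, so that they glue to a genuine line bundle. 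The symmetry of this prescription under interchanging $X$ and $X'$, which is the substance of Theorem~\ref{introthm}, is what should make the compatibility hold; the delicate part is propagating the normalization twists carefully through the change of variables $\kappa$ of~(\ref{kap}) and through the $I\mapsto I^{-1}$ relabelling so that everything matches on the nose.
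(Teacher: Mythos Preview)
Your gluing approach is sound and genuinely different from the paper's. Rather than assembling $\mathfrak m$ from its restrictions to the components $\Or_K\times\Or'_L$, the paper writes down an explicit \emph{interpolation function}
\[
\tilde{\mathfrak m}(\ttt,\ttt')=(-1)^{n(n-1)/2}\sum_{I,J\in\mathfrak S_n}A_{I,J}^{-1}(\zz,\zz')\,W_J(\ttt,\zz,\hbar,\zz')\,W_{I^{-1}\cdot\sigma_0}(\ttt',\zz'_{\sigma_0},1/\zz)
\]
on the ambient Chern-root space $\mathsf S(X)\times\mathsf S(X')\times\cE_{\bT\times\bT'}$, checks directly (using Theorem~\ref{duthrm}) that specializing $\ttt'=\zz'_{I^{-1}}$ gives $W_I$ and dually for $\ttt$, and then sets $\mathfrak m:=c^*(\tilde{\mathfrak m})$ via the Chern-class embedding~$c$. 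The advantage of the paper's route is that the line bundle issue you correctly flag as ``the real work'' evaporates: $\mathcal L$ is simply the pullback of whatever line bundle $\tilde{\mathfrak m}$ is a section of, no gluing of divisors required. Your approach, by contrast, is more intrinsic to the GKM structure and makes transparent that Theorem~\ref{introthm} is precisely the cocycle condition needed for the local pieces to patch; but you then owe the line-bundle check, and on components where $A_{L^{-1},K}$ vanishes identically the section alone no longer pins down the local bundle, so you would need a direct comparison of the quasiperiods of $\mathcal T(L^{-1})$ and $\kappa^*\mathcal T'(K^{-1})$. One notational point: the components of $\Ell_{\bT\times\bT'}(X\times X')$ are the non-extended $\Or_K\times\Or'_L\cong\cE_\bT\times\cE_{\bT'}$, not $\widehat\Or_K\times\widehat\Or'_L$; the map $\kappa$ of~(\ref{kap}) is exactly what identifies each $\Or_K\times\Or'_L$ simultaneously with an extended orbit $\widehat\Or_K$ on the $X$-side and with $\widehat\Or'_L$ on the $X'$-side, which is what your argument actually uses.
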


\subsection[Weight functions and $R$-matrices]{Weight functions and $\boldsymbol{R}$-matrices}

Our proof of Theorem \ref{relenv} relies on the observation that the elliptic stable envelope $\Stab_\sigma (I)$, as defined in Aganagic--Okounkov \cite{AOelliptic}, is related to \emph{weight functions} $W^{\sigma}_I (\ttt, \zz, \hbar, \mub)$, defined in~\cite{RTV}. The weight function $W^{\sigma}_I (\ttt, \zz, \hbar, \mub)$ is a section of a certain line bundle over $\mathsf{S}(X) \times \sE_\bT \times \sE_{\Pic(X)}$ in (\ref{incdia}). The elliptic stable envelope $\Stab_\sigma (I)$ is the restriction of this section to
the extended elliptic cohomology $\textsf{E}_\bT(X)$.

Weight functions first arise as integrands in the integral presentations of solutions to qKZ equations, associated with certain Yangians of type A \cite{FTV, FTV2,TV4, TV5, TV6, TV7, Var}. For us, the weight functions here are the elliptic version introduced in~\cite{RTV}.

Important properties of weight functions are described by the so called $R$-matrix relations. These relations describe the transformation properties of weight functions under the permutations of equivariant parameters. We show that these relations, in fact, uniquely determine the restriction matrices~$A_{I,J}$.

Similar relations, describing the transformations of weight functions under the permutations of K\"ahler parameters were recently found by Rim\'anyi--Weber in~\cite{RW}. The proof of our main theorem is based on the observation that these new relations can be understood as the $R$-matrix relations for the 3d mirror variety $X'$ (because the K\"ahler parameters of $X$ is identified with equivariant parameters of $X'$ under the 3d mirror symmetry). The $R$-matrix relations and the dual $R$-matrix relations then provide two ways to compute the restriction matrices, which is essentially two sides of the main equality of Theorem~\ref{introthm}.

Let us note that 3d self symmetry of full flag varieties should have important applications to representation theory. In particular, we expect that it is closely related to self-symmetry of double affine Hecke algebra under the Cherednik's Fourier transform~\cite{Cher1}. Another interesting example of a symplectic variety which is 3d mirror self-dual is the Hilbert scheme of points on the complex plane ${\rm Hilb}^{n}\big(\C^2\big)$. The explicit formulas for the elliptic stable envelopes in this case were obtained in~\cite{EllHilb}. In this case, however, ${\rm Hilb}^{n}\big(\C^2\big)$ is not a GKM variety and therefore methods used in this paper are unavailable.

We remark also that this paper deals with the cotangent bundles of full flag varieties of $A$-type. In general, it is natural to expect that cotangent bundles of the full flag varieties for a group $G$ is a~3d mirror of the cotangent bundle of full flag variety for the Langlands dual group~$^{L}G$. Though in general these flag varieties are not quiver varieties, both the $R$-matrix and the Bott--Samelson recursion~\cite{RW} is available in this setting and the 3d mirror symmetry can be proved using technique similar to one in the present paper.

\section[Equivariant elliptic cohomology of $X$]{Equivariant elliptic cohomology of $\boldsymbol{X}$}

In this section we give a brief introduction to equivariant elliptic cohomology. For detailed definitions and constructions, we refer the reader to \cite{ell1,ell2,ell3,ell4,ell5,ell6}, and also the recently appeared new approach \cite{BT}.

\subsection{The equivariant elliptic cohomology functor}

Let $X$ be a smooth quasiprojective variety over $\C$, and $\bT$ be a torus acting on $X$. Recall that $\bT$-equivariant cohomology is a contravariant functor from the category of varieties with $\bT$-actions to the category of algebras over the ring of equivariant parameters $H^*_\bT (\pt)$, which is naturally identified with affine schemes over $\Spec H^*_\bT (\pt) \cong \C^r$, where $r = \dim \bT$. Equivariant K-theory can be defined in a similar way, with the additive group $\C^r$ replaced by the multiplicative~$\Spec K_\bT (\pt) \cong (\C^{\times})^r$.

Let us set
\begin{gather*}
E := \C^{\times} / q^{\Z},
\end{gather*}
which is a family of elliptic curves parametrized by the punctured disk $0<|q|<1$. In the general definition of elliptic cohomology one works with more general families of elliptic curves, but considering $E$ will be sufficient for the purposes of the present paper.

Equivariant elliptic cohomology is constructed as a covariant functor
\begin{gather*}
\Ell_\bT\colon \ \{ \text{varieties with $\bT$-actions} \} \rightarrow \{ {\rm schemes}\},
\end{gather*}
for which the base space of equivariant parameters is
\begin{gather*}
\sE_\bT := \Ell_\bT (\pt) \cong E^r.
\end{gather*}
By functoriality, every $X$ with $\bT$-action is associated with a structure map $ \Ell_\bT(\pi)\colon \Ell_\bT (X) \to \Ell_\bT(\pt)$, induced by the projection $\pi\colon X \to \pt$.

We briefly describe the construction of equivariant elliptic cohomology. For each point $t\in \sE_\bT$, take a small analytic neighborhood $U_t$, which is isomorphic via the exponential map to a small analytic neighborhood in $\C^r$. Consider the sheaf of algebras
\begin{gather*}
\mathscr{H}_{U_t} := H^\bullet_\bT \big(X^{\bT_t}\big) \otimes_{H^\bullet_\bT (\pt)} \Oc^{{\rm an}}_{U_t},
\end{gather*}
where
\begin{gather*}
\bT_t := \bigcap_{\substack{\chi \in \mathrm{char} (\bT),\, \chi(\tilde t) \in q^{\Z} }} \ker \chi \subset \bT,
\end{gather*}
and $\tilde t \in \bT$ is any lift of $t \in \cE_\bT$.

Those algebras glue to a sheaf $\mathscr{H}$ over $\sE_\bT$, and we define $\Ell_\bT (X) := \Spec_{\sE_\bT} \mathscr{H}$. The fiber of $\Ell_\bT (X)$ over $t$ is obtained by setting local coordinates to $0$, as described in the following diagram~\cite{AOelliptic}:
\begin{gather*}
\xymatrix{
 \Spec H^{\bullet}\big(X^{\bT_{t}}\big) \ar@{^{(}->}[r] \ar[d]^{\Ell_\bT(\pi) } & \Spec H_\bT^{\bullet}\big(X^{\bT_{t}}\big) \ar[d] & (\pi^*)^{-1} (U_t) \ar[l] \ar[r] \ar[d] & \ar[d]^{\Ell_\bT(\pi) } {\rm Ell}_{\bT}(X) \\
 \{t\}
 \ar@{^{(}->}[r]^{} & \C^r & U_t \ar[l] \ar[r] &
 \sE_{\bT}. }
\end{gather*}
This diagram describes a structure of the scheme $\Ell_\bT (X)$ and gives one of several definitions of elliptic cohomology.

\subsection{Chern roots and extended elliptic cohomology}

In this subsection, we consider $X$ constructed as a GIT quotient of the form $Y /\!\!/\!_\theta \bG$, where $\bG$ is a linear reductive group acting on an affine space $\C^N$, $\theta$ is a fixed character of~$\bG$, and $Y \subset \C^N$ is a $\bG$-invariant subvariety. Let $\bT$ be a torus acting on $\C^N$ which commutes with $\bG$. The action hence descends to $X$.

Given a character $\chi\colon \bG \to \C^*$, the 1-dimensional $\bG$-representation $\C_\chi$ descends to a line bundle $L_\chi$ on the quotient $X$. In other words, consider the map
\begin{gather*}
X = Y^{ss} / \bG \subset [Y/\bG] \subset \big[\C^N / \bG\big] \to B\bG \xrightarrow{\chi} B\C^*.
\end{gather*}
The bundle $L_\chi$ is the pullback of the tautological line bundle on $B\C^*$ to $X$. More generally, any $\bG$-representation pulls back to a vector bundle, called a tautological bundle, on $X$.

Let $\bK\subset \bG$ be the maximal torus, and $W$ be the Weyl group. Then $\Ell_G (\pt) \cong E^{\dim \bK} / W$. From the diagram above, we have the cohomological Kirwan map
\begin{gather*}
H^*_\bK(\pt)^W \otimes H^*_\bT (\pt) \cong H^*_G (\pt) \otimes H^*_\bT (\pt) \to H^*_\bT(X),
\end{gather*}
and also the elliptic Kirwan map
\begin{gather} \label{Kirwan-ell}
\Ell_\bT(X) \to \big( E^{\dim \bK} / W \big) \times \sE_\bT.
\end{gather}

We say that $X$ satisfies \emph{Kirwan surjectivity}, if (\ref{Kirwan-ell}) is a closed embedding. By the results of~\cite{McGN}, it holds for any Nakajima quiver variety.

To include the dependence on K\"ahler parameters, consider
\begin{gather*}
\sE_{\Pic (X)} := \Pic (X) \otimes_{\Z} E \cong E^{\dim \Pic(X)},
\end{gather*}
and define the extended equivariant elliptic cohomology by
\begin{gather*}
\textsf{E}_\bT (X) := \Ell_\bT (X) \times \sE_{\Pic(X)}.
\end{gather*}
In particular, if $X$ is a GIT quotient satisfying Kirwan surjectivity, one has the embedding
\begin{gather*}
\xymatrix{
 \textsf{E}_\bT(X) \ar[d] \ar@{^{(}->}[r] & ( E^{\dim \bK} / W ) \times \sE_\bT \times \sE_{\Pic(X)} \\
 \sE_\bT \times \sE_{\Pic(X)}.
}
\end{gather*}
The coordinates on the three components of the RHS, as well as their pullbacks to $\textsf{E}_\bT(X)$, will be called \emph{Chern roots}, \emph{equivariant parameters} and \emph{K\"ahler parameters} respectively.

\subsection{GKM varieties}

For a general $X$, the equivariant elliptic cohomology $\Ell_\bT (X)$ may be difficult to describe, even if the diagram above given by Kirwan surjectivity is present. However, for the following large class of varieties called GKM varieties, it admits a nice explicit combinatorial characterization. There are many classical examples of GKM varieties, including toric varieties, hypertoric varieties, and partial flag varieties.

\begin{Definition}
 Let $X$ be a variety with a $\bT$-action. We say that $X$ is a GKM variety, if
 \begin{itemize}\itemsep=0pt
 \item $X^{\bT}$ is finite,
 \item for every two fixed points $p,q\in X^{\bT}$ there is no more than one $\bT$-equivariant curve connecting them.
 \item $X$ is $\bT$-formal, in the sense of \cite{GKM}.
 \end{itemize}
\end{Definition}

By definition, a GKM variety admits only finitely many $\bT$-fixed points and 1-dimensional $\bT$-orbits. In particular, there are finitely many $\bT$-equivariant compact curves connecting fixed points, and they are all rational curves isomorphic to $\P^1$.

By the localization theorem, we know that the irreducible components of $\Ell_\bT (X)$ are para\-meterized by fixed points $p\in X^\bT$, each isomorphic to the base $\sE_\bT$. Therefore, set-theoretically, $\Ell_\bT (X)$ is the union of $|X^{\bT}|$ copies of $\sE_{\bT}$:
\begin{gather} \label{elcoh}
\Ell_{\bT}(X)=\bigg(\coprod\limits_{p\in X^{\bT}} \Or_{p}\bigg) /\Delta,
\end{gather}
where $\Or_{p}\cong \sE_{\bT}$ and $/\Delta$ denotes the gluing data. Following \cite{AOelliptic} we will call $\Or_{p}$ the $\bT$-orbit associated to the fixed point $p$ in $\Ell_{\bT}(X)$ (even though it is not an orbit of any group action).

We have the following explicit description of $\Ell_\bT(X)$. The proof is a direct application of the characterization \cite{GKM} of $H^*_\bT(X)$ when $X$ is GKM, see \cite{RVSZ}.

\begin{Proposition} \label{progkm}
If $X$ is a GKM variety, then
 \begin{gather*}
 \Ell_{\bT}(X)=\bigg(\coprod\limits_{p\in X^{\bT}} \Or_{p}\bigg) /\Delta,
 \end{gather*}
where $/\Delta$ denotes the intersections of $\bT$-orbits $\Or_{p}$ and $\Or_{q}$ along the hyperplanes
 \begin{gather*}
 \Or_{p} \supset \chi^{\perp}_{C} \subset \Or_{q},
 \end{gather*}
for all $p$ and $q$ connected by an equivariant curve $C$, where $\chi_{C}$ is the $\bT$-character of the tangent space $T_{p} C$, and $\chi_C^\perp$ is the hyperplane in $\cE_\bT$ associated with the hyperplane $\ker \chi_C \subset \bT$. The intersections of orbits $\Or_p$ and $\Or_q$ are transversal and hence the scheme $\Ell_\bT (X)$ is a variety with simple normal crossing singularities.
\end{Proposition}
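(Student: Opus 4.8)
The plan is to deduce the statement from the Goresky--Kottwitz--MacPherson (GKM) description of $H^{\bullet}_{\bT}(X)$ by feeding it into the local construction of $\Ell_{\bT}(X)$ recalled above. Recall the GKM input: if $X$ is a GKM variety then the restriction map $H^{\bullet}_{\bT}(X)\hookrightarrow H^{\bullet}_{\bT}\big(X^{\bT}\big)=\bigoplus_{p\in X^{\bT}}H^{\bullet}_{\bT}(\pt)$ is injective, with image the tuples $(f_{p})_{p}$ such that $f_{p}\equiv f_{q}\pmod{\chi_{C}}$ whenever $p$ and $q$ are joined by a $\bT$-equivariant curve $C$, where $\chi_{C}$ is the character of $T_{p}C$. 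The same description holds with $\bT$ replaced by any subtorus $\bT'\subseteq\bT$: one checks that $X^{\bT'}$ is again a GKM variety with the same fixed point set and with equivariant curves exactly those curves $C$ of $X$ for which $\chi_{C}|_{\bT'}=1$, so $H^{\bullet}_{\bT}\big(X^{\bT'}\big)$ admits the analogous presentation.

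Next I would localize. Fix $t\in\sE_{\bT}$, let $\bT_{t}\subseteq\bT$ be the associated subtorus and $U_{t}$ the analytic neighbourhood used in the construction, so that $\Ell_{\bT}(X)|_{U_{t}}=\Spec\big(H^{\bullet}_{\bT}\big(X^{\bT_{t}}\big)\otimes_{H^{\bullet}_{\bT}(\pt)}\Oc^{{\rm an}}_{U_{t}}\big)$. Applying the GKM presentation to $X^{\bT_{t}}$ and taking $\Spec$, this scheme is the union of $|X^{\bT}|$ copies of $U_{t}$, one for each $p$ (the piece $\Or_{p}|_{U_{t}}$), with $\Or_{p}$ and $\Or_{q}$ identified along $\{\chi_{C}=0\}=\chi_{C}^{\perp}\cap U_{t}$ for every equivariant curve $C$ joining $p$ and $q$ with $\chi_{C}|_{\bT_{t}}=1$; over such a wall this is just the elementary computation of $H^{\bullet}_{\bT}\big(\P^{1}\big)$ for an equivariant $\P^{1}$. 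Since $\chi_{C}|_{\bT_{t}}=1$ is equivalent to $t\in\chi_{C}^{\perp}$, for generic $t$ no such $C$ occurs and the model is $\coprod_{p}U_{t}$, which assembles the orbits $\Or_{p}$ and recovers (\ref{elcoh}); over a wall $\chi_{C}^{\perp}$ the copies $\Or_{p}$ and $\Or_{q}$ get glued along it. As these models agree on overlaps they patch to a single scheme over $\sE_{\bT}$, which is exactly the one described in the statement. The transversality and simple-normal-crossing assertions come out of the same local models: by the explicit GKM presentation the ring $H^{\bullet}_{\bT}\big(X^{\bT_{t}}\big)\otimes\Oc^{{\rm an}}_{U_{t}}$ is, locally, that of a union of coordinate subspaces $\{s_{i}s_{j}=0,\ i\ne j\}$, so the components $\Or_{p}$ meet pairwise transversally, and since any two fixed points are joined by at most one equivariant curve, $\Or_{p}$ and $\Or_{q}$ meet along at most one wall $\chi_{C}^{\perp}$.

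I expect the main obstacle to be making the localization uniform in $t$: one must control $X^{\bT_{t}}$ for \emph{every} stabiliser subtorus $\bT_{t}$, not just the generic and codimension-one ones, verify that $\bT$-formality is inherited by each $X^{\bT_{t}}$ so that the GKM presentation is actually available there, and check that the locally defined gluings are compatible on overlaps. Granting this, the rest is bookkeeping with the moment graph of $X$.
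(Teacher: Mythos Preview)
Your proposal is correct and follows essentially the same approach as the paper: the paper's proof is a one-line reference to the GKM characterization of $H^{\bullet}_{\bT}(X)$ and a citation to \cite{RVSZ} for details, and your argument is precisely a worked-out version of that, feeding the GKM presentation of $H^{\bullet}_{\bT}\big(X^{\bT_{t}}\big)$ into the local definition of $\Ell_{\bT}(X)$ and gluing. The caveats you flag (inheritance of $\bT$-formality by the fixed loci $X^{\bT_{t}}$ and compatibility of the local models) are exactly the points suppressed by the paper's citation.
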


The extended version also has the same structure
\begin{gather} \label{extver}
\textsf{E}_{\bT}(X)=\bigg(\coprod\limits_{p\in X^{\bT}} \widehat\Or_{p}\bigg) /\Delta,
\end{gather}
where $\Delta$ is the same as before, and $\widehat\Or_p := \Or_p \times \sE_{\Pic(X)}$.

For each fixed point $p\in X^\bT$, we have the diagram
\begin{gather} \label{diag-Kirwan}\begin{split} &
\xymatrix{
 \widehat \Or_p \ar@{^{(}->}[r] & \textsf{E}_\bT(X) \ar[d] \ar@{^{(}->}[r] & \big( E^{\dim \bK} / W \big) \times \sE_\bT \times \sE_{\Pic(X)} \\
 & \sE_\bT \times \sE_{\Pic(X)}.
}\end{split}
\end{gather}
Let $t_1, \dots, t_{\dim \bK}$ be the elliptic Chern roots. The embedding of $\widehat\Or_p$ in $\big( E^{\dim \bK} / W \big) \times \sE_\bT \times \sE_{\Pic(X)}$ is always cut out by linear equations $t_i = t_i \big|_p$, $1\leq i\leq \dim \bK$, where $t_i \big|_p$ is a certain linear combination of equivariant parameters.

\begin{Example} Consider the $(\C^*)^{N+1}$-action on $\P^N$. The equivariant K-theory ring, viewed as a scheme, fits into the following diagram
 \begin{gather*}
 \xymatrix{
\Spec \dfrac{\C \big[L^{\pm 1}, z_1^{\pm 1}, \dots, z_{N+1}^{\pm 1}, \mu^{\pm 1} \big]}{\langle (1 - z_1 L) \cdots (1-z_{N+1} L) \rangle} \ar[d] \ar@{^{(}->}[r] & \Spec \C \big[L^{\pm 1}, z_1^{\pm 1}, \dots, z_{N+1}^{\pm 1}, \mu^{\pm 1} \big] \\
 \Spec \C \big[ z_1^{\pm 1}, \dots, z_{N+1}^{\pm 1}, \mu^{\pm 1} \big],
 }
 \end{gather*}
where $L$ is the class of $\mathcal{O}(1)$, $z_1, \dots, z_{N+1}$ are equivariant parameters, and $\mu$ is the K\"ahler parameter. Intuitively, $E_T\big(\P^N\big)$ is simply the same picture ``quotient by $q^{\Z^{N+1} \times \Z}$''. In particular, the relation $(1 - z_1 L) \cdots (1-z_{N+1} L)$ gives a simple normal crossing of $N+1$ components, each isomorphic to the base. The $i$-th component $\widehat\Or_{p_i}$, which we call orbit corresponding to the fixed point~$i$, is cut out by the linear equation $1 - z_i L = 0$.
\end{Example}

\subsection[Geometry and extended elliptic cohomology of $X$]{Geometry and extended elliptic cohomology of $\boldsymbol{X}$}\label{xdef}

From now on, let $X$ be the Nakajima quiver variety associated to the $A_{n-1}$-quiver, with dimension vector $(1, 2, \dots, n-1)$ and framing vector $(0, 0, \dots, 0, n)$. More precisely, the quiver looks like
\begin{gather*}
\xymatrix{
 V_1 \ar@<0.5ex>[r]^-{{\bf a}_1} & V_2 \ar@<0.5ex>[r]^-{{\bf a}_2} \ar@<.5ex>[l]^-{{\bf b}_1} & \cdots \ar@<.5ex>[l]^-{{\bf b}_2} \ar@<.5ex>[r]^-{{\bf a}_{n-2}} & V_{n-1} \ar@<.5ex>[l]^-{{\bf b}_{n-2}} \ar@<.5ex>[d]^-{\bf j} \\
 & & & W \ar@<.5ex>[u]^-{\bf i} ,
}
\end{gather*}
where
\begin{gather*}
V_i = \C^i, \qquad 1\leq i\leq n-1, \qquad W = \C^n.
\end{gather*}
By definition, one considers the vector space
\begin{gather*}
R = \bigoplus_{i=1}^{n-2} \Hom (V_i, V_{i+1}) \oplus \Hom (V_{n-1}, W),
\end{gather*}
acted upon naturally by $\bG := \prod\limits_{i=1}^{n-1} {\rm GL}(V_i)$, and the moment map $\mu\colon T^* R \to \prod\limits_{i=1}^{n-1} {\mathfrak{gl}}(V_i)^*$ given by
\begin{gather*}
{\bf b}_1 {\bf a}_1 = 0, \qquad {\bf a}_i {\bf b}_i - {\bf b}_{i+1} {\bf a}_{i+1} = 0, \qquad 1\leq i\leq n-3, \qquad {\bf a}_{n-2} {\bf b}_{n-2} - {\bf i} {\bf j} = 0.
\end{gather*}
Given any stability condition $\theta = (\theta_1, \dots, \theta_{n-1}) \in \mathbb{Z}^{n-1}$, there is a $\bG$-character $(g_i)_{i=1}^{n-1} \mapsto \prod\limits_{i=1}^{n-1} \left( \det g_i \right)^{\theta_i}$. We choose the stability condition to be $\theta_i <0$, $1\leq i\leq n-1$, and define
\begin{gather*}
X := \mu^{-1} (0) /\!\!/\!_\theta \bG.
\end{gather*}

\begin{Proposition} The quiver variety $X$ defined above is isomorphic to the cotangent bundle of the complete flag variety in $\C^n$.
\end{Proposition}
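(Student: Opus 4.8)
The plan is to unwind the quiver-variety construction and recognize the resulting GIT quotient as the cotangent bundle $T^*\!\Fl(\C^n)$ of the variety of complete flags. First I would describe the moment-map equations geometrically: choose a point in $\mu^{-1}(0)$ lying in the stable locus, and show that with the stability condition $\theta_i<0$ for all $i$ the maps ${\bf a}_i\colon V_i\to V_{i+1}$ (and ${\bf a}_{n-1}:={\bf i}\colon V_{n-1}\to W$) are all \emph{injective}; this is the standard King-stability analysis for type-$A$ quivers (any $\bG$-invariant subspace of the $V_i$'s killed by all ${\bf a}_i$ would destabilize). Consequently the images $0\subset \im({\bf a}_{n-1}\cdots{\bf a}_1)\subset\cdots\subset\im({\bf a}_{n-1}{\bf a}_{n-2})\subset\im({\bf a}_{n-1})\subset W=\C^n$ form a complete flag $F_1\subset F_2\subset\cdots\subset F_{n-1}\subset\C^n$ with $\dim F_i=i$, and the quotient by $\bG$ exactly rigidifies the choice of isomorphisms $V_i\cong F_i$. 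So the locus where the ${\bf b}$'s all vanish maps isomorphically onto $\Fl(\C^n)$.

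Next I would identify the cotangent directions. On the chart where the ${\bf a}$'s are fixed injections identifying $V_i$ with $F_i$, the moment-map relations ${\bf b}_1{\bf a}_1=0$, ${\bf a}_i{\bf b}_i-{\bf b}_{i+1}{\bf a}_{i+1}=0$, and ${\bf a}_{n-2}{\bf b}_{n-2}-{\bf i}{\bf j}=0$ recursively force each ${\bf b}_i\colon V_{i+1}\to V_i$ (and ${\bf j}\colon W\to V_{n-1}$) to descend to a well-defined map $\C^n/F_i\to F_{i+1}/F_i$ — equivalently an element of $\Hom(F_{i+1}/F_i,\,\C^n/F_i)^*$ — and these are precisely the coordinates on the fiber of $T^*\!\Fl$ at the flag $F_\bullet$, since $T_{F_\bullet}\Fl(\C^n)=\bigoplus_i\Hom(F_i/F_{i-1},\,\C^n/F_i)$. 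The cleanest way to package this is to exhibit the explicit mutually inverse maps between $X$ and $T^*\!\Fl(\C^n)$ as an isomorphism of symplectic varieties, checking that the moment-map equations match the annihilator conditions cutting out the conormal directions, and that $\bG$ acts freely on the stable locus so the quotient is a manifold of the expected dimension $2\dim\Fl(\C^n)=n(n-1)$.

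An alternative, perhaps slicker route is to use the known $\GL_n$-equivariant identification of type-$A$ quiver varieties with cotangent bundles of partial flag varieties (Nakajima): the framing $(0,\dots,0,n)$ together with dimension vector $(1,2,\dots,n-1)$ gives the partial-flag type $(1,2,\dots,n-1,n)$, which is the \emph{complete} flag, and the hyperk\"ahler/holomorphic-symplectic quotient of $T^*R$ is by construction $T^*$ of that flag variety (on the stable locus). I would cite this but still include the direct linear-algebra verification above for completeness, since it is short and makes the fixed-point combinatorics (permutations, used later) transparent.

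The main obstacle is the bookkeeping in the second step: one must check carefully that the moment-map relations do not merely \emph{allow} but actually \emph{force} the ${\bf b}_i$ to factor through the successive quotients $\C^n/F_i$, i.e.\ that $\im({\bf b}_i)\subset F_i$ and $\ker({\bf b}_i)\supset F_i$ (viewing ${\bf b}_i$ as a map out of $V_{i+1}\cong F_{i+1}\hookrightarrow\C^n$ suitably), propagating the relation ${\bf b}_1{\bf a}_1=0$ upward through the chain. This is a finite downward/upward induction on $i$ using injectivity of the ${\bf a}$'s, but it is the only place where something could go wrong, so I would spell it out. Everything else — freeness of the $\bG$-action on the stable locus, dimension count, matching of symplectic forms — is routine.
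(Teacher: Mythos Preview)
Your approach is essentially the paper's: stability with $\theta_i<0$ forces the forward maps to be injective (the paper does this by exhibiting the specific invariant subspace $S=\bigoplus_i\ker\mathbf{a}_i\oplus\ker\mathbf{j}$, checking via the moment-map equations that it is $(\mathbf{a},\mathbf{b})$-invariant, and invoking Nakajima's stability criterion), giving the complete flag, after which the $\mathbf{b}$'s encode the cotangent fiber --- the paper literally says only that last clause and stops, so your inductive unpacking already goes beyond it. One notational slip to fix: in the paper's conventions $\mathbf{j}\colon V_{n-1}\to W$ and $\mathbf{i}\colon W\to V_{n-1}$, the reverse of what you wrote.
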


\begin{proof} Recall the following criterion of stability \cite{Intro}: a representative $({\bf a, b, i, j})$ is stable if and only if for any invariant subspace $S\subset V := \bigoplus_i V_i$, the following two conditions hold
\begin{enumerate}\itemsep=0pt
\item[1)] if $S \subset \ker {\bf j}$, then either $\theta \cdot \dim S >0$ or $S = 0$;
\item[2)] if $S \supset \operatorname{im} {\bf i}$, then either $\theta \cdot \dim S > \theta \cdot \dim V$ or $S = V$.
\end{enumerate}
For a representative $({\bf a, b, i, j})$ the space
 \begin{gather*}
 S = \bigoplus_{i=1}^{n-2} \ker {\bf a}_i \oplus \ker {\bf j}
 \end{gather*}
is stable under ${\bf a}$ and ${\bf b}$ by the moment map equations. Hence for the representative to be stable, it has to satisfy 1), which implies $S = 0$. In other words, ${\bf a}_i$ and ${\bf j}$ are injective, which gives a~complete flag in $\C^n$. The maps ${\bf b}_i$ then represent a~point in the cotangent fiber.
\end{proof}

Consider the torus $(\C^*)^n$ acting on $(x_1, \dots, x_n) \in W$, which descends to $X$, and an extra torus $\C^*_\hbar$ scaling the cotangent fibers{\samepage
\begin{gather*}
(x_1, \dots, x_n) \mapsto \big(x_1 z_1^{-1}, \dots, x_n z_n^{-1}\big), \qquad ({\bf a, b, i, j}) \mapsto \big({\bf a}, \hbar^{-1} {\bf b}, \hbar^{-1} {\bf i}, {\bf j} \big) ,
\end{gather*}
where $z_1, \dots, z_n, \hbar$ are the equivariant parameters.}

Let $\cV_k$, $1\leq k\leq n-1$ be the tautological bundles associated with $V_k$. Denote their Chern roots decomposition by
\begin{gather*}
\cV_k = t_1^{(k)} + \dots + t_k^{(k)}.
\end{gather*}
in the K-theory of $X$. Let $\{e_1, \dots, e_n\}$ be the standard basis of $W = \C^n$. Fixed points of $X$ are parameterized by complete flags $V_1 \subset \cdots \subset V_{n-1} \subset W$, where each $V_k$ is a coordinate subspace in $W$, i.e., spanned by a subset of size $k$ of $e_i$'s. For any $1\leq k\leq n$, let $I_k$ be the index such that $V_k / V_{k-1} = \C e_{I_k}$. Then the tuple $(I_1, \dots, I_n)$ is a permutation of the indices $(1, \dots, n)$. In other words, for each element of the symmetric group $I\in \mathfrak{S}_n$, there is a fixed point of~$X$, given by the complete flag $V_1 (I) \subset \cdots \subset V_{n-1} (I) \subset W$, where
\begin{gather*}
V_k (I) = \mathrm{Span}_\C \{e_{I_1}, \dots, e_{I_k} \}, \qquad 1\leq k\leq n.
\end{gather*}

We also introduce the notation of ordered indices:
\begin{gather} \label{ordset}
\big\{ i_1^{(k)} < \cdots < i_k^{(k)} \big\} = \{ I_1, \dots, I_k \}, \qquad 1\leq k\leq n.
\end{gather}

By Kirwan surjectivity, the extended elliptic cohomology $\textsf{E}_\bT(X)$ embeds into the space
\begin{gather*}
E \times \Sym^2 E \times \cdots \times \Sym^{n-1} E \times \sE_\bT \times \sE_{\Pic (X)}
\end{gather*}
with coordinates
\begin{gather*}
\big(t_1^{(1)}, t_1^{(2)}, t_2^{(2)}, \dots, t_1^{(n-1)}, \dots, t_{n-1}^{(n-1)}, z_1, \dots, z_n, \hbar, \mu_1, \dots, \mu_n \big).
\end{gather*}
Moreover, by the GKM description, the extended elliptic cohomology is a union of orbits
\begin{gather} \label{excx}
\textsf{E}_{\bT}(X)=\bigg(\coprod\limits_{I \in \mathfrak{S}_n} \widehat\Or_I \bigg) /\Delta,
\end{gather}
where $\widehat\Or_I$ is cut out by the linear equations
\begin{gather} \label{restr}
t_l^{(k)} = z_{i_l^{(k)}}, \qquad 1\leq l\leq k\leq n.
\end{gather}
Note that in these equations of Chern root restrictions, we have implicitly chosen an ordering of Chern roots $t_1^{(k)}, \dots, t_k^{(k)}$, depending on each fixed point.

The tangent bundle at the fixed point $I$ is
\begin{gather*}
T_I X = \sum_{1\leq l<k\leq n} \frac{z_{I_l}}{z_{I_k}} + \hbar^{-1} \sum_{1\leq l<k\leq n} \frac{z_{I_k}}{z_{I_l}}.
\end{gather*}
Choose a cocharacter of the torus $(\C)^*$
\begin{gather*}
\sigma = (1,2, \dots, n) \in \R^n,
\end{gather*}
which decomposes the tangent bundle as $T_I X = N_I^+ \oplus N_I^-$, where
\begin{gather*}
N_I^- = \sum_{\substack{1\leq l< k \leq n \\ I_l < I_k }} \frac{z_{I_l}}{z_{I_k}} + \hbar^{-1} \sum_{\substack{1\leq l < k \leq n \\ I_l > I_k }} \frac{z_{I_k}}{z_{I_l}}, \qquad N_I^+ = \sum_{\substack{1\leq l< k \leq n \\ I_l > I_k }} \frac{z_{I_l}}{z_{I_k}} + \hbar^{-1} \sum_{\substack{1\leq l < k \leq n \\ I_l < I_k }} \frac{z_{I_k}}{z_{I_l}} .
\end{gather*}

\section[Elliptic weight functions and $R$-matrices]{Elliptic weight functions and $\boldsymbol{R}$-matrices}\label{sec:2}

\subsection{Notations and parameters} \label{notesec}
Let $q\in \C^*$ be a complex number with $|q|<1$. The skew Jacobi theta function is defined by
\begin{gather*}
\thi(x)=\big(x^{1/2}-x^{-1/2}\big)\pho(qx)\>\pho(q/x),\qquad \pho(x)=\prod_{s=0}^\infty\big(1-q^s x\big).
\end{gather*}
It has the following properties
\begin{gather*}
\frac{\thi(qx)}{\thi(x)}=-\frac1{q^{1/2}x},\qquad\thi(1/x)=-\thi(x).
\end{gather*}

The elliptic weight functions depend on the following sets of parameters:
\begin{itemize}\itemsep=0pt
\item The equivariant parameters $\zz=(z_{1},\dots, z_{n})$ representing the coordinates on $\Or_I\cong \cE_{\bT}$ in~(\ref{elcoh}).
\item The K\"ahler (or dynamical) parameters $\mub=(\mu_{1},\dots, \mu_{n})$ representing the coordinates on $\cE_{\rm{Pic}(X)}$-part of the extended orbits $\widehat{\Or}_{I}$ in~(\ref{extver}).

\item The Chern roots $\ttt^{(k)} = \big(t^{(k)}_{1},\dots, t^{(k)}_{k}\big)$ of the rank~$k$ tautological bundle $\cV_k$ over $X$. We will abbreviate by $\ttt=\big(t^{(1)}_{1},\dots,t^{(n)}_{n}\big)$ the set of all Chern roots of all tautological bundles.

\item The $\bT$-equivariant weight $\hbar$ representing the weight of the symplectic form on~$X$.
\end{itemize}

For a permutation $\sigma$ we write $\zz_{\sigma}=(z_{\sigma(1)},\dots,z_{\sigma(n)})$ and $1/\zz=(1/z_1,\dots,1/z_n)$.

As we discussed in Section \ref{xdef} the fixed points $X^{\bT}$ are labeled by permutations $I=(I_1,\dots, I_n)$ of the ordered set $(1,\dots, n)$. By abuse of language we will denote the fixed point corresponding to $I$ by $I$ as well. For another permutation $\sigma \in \mathfrak{S}_n$, the product $\sigma\cdot I$ will denote the composed permutation (and also the corresponding fixed point)
\begin{gather*}
(1, \dots, n) \mapsto (\sigma (1), \dots, \sigma (n)) \mapsto (I_{\sigma (1)}, \dots, I_{\sigma (n)} ).
\end{gather*}
We \looseness=-1 will denote the restrictions of Chern roots to the orbits corresponding to fixed points (\ref{restr}) by
\begin{gather} \label{fpres}
\zz_{I}=\big(t^{(k)}_{a}=z_{i^{(k)}_a}\big) ,
\end{gather}
where $i^{(k)}_a$ are defined by (\ref{ordset}).

\subsection{Weight functions}
Let us define the elliptic weight functions
\begin{gather} \label{WI2}
W_I(\ttt,\zz,\hbar,\mub) =\Sym_{\>\ttt^{(1)}} \cdots \Sym_{\ttt^{(n-1)}} U_I(\ttt,\zz,\hbar,\mub),
\end{gather}
where the symbol $\Sym$ denotes the symmetrization over the corresponding set of variables and
\begin{gather*}
U_I(\ttt,\zz,\hbar,\mub) =\prod\limits_{k=1}^{n-1} \left(
\dfrac{\prod\limits_{a=1}^{k} \prod\limits_{c=1}^{k+1} \psi_{I,k,a,c} \left( \dfrac{t^{(k+1)}_c}{t^{(k)}_a} \right) }{\prod\limits_{1 \leq a<b \leq k } \thi \left( \dfrac{t^{(k)}_a \hbar}{t^{(k)}_b} \right) \thi \left( \dfrac{t^{(k)}_b}{t^{(k)}_a} \right) }\right)
\end{gather*}
with convention $t^{(n)}_{i}=z_{i}$ and
\begin{gather*}
\psi_{I,k,a,c}(x) = \begin{cases}
 \thi(\hbar x) ,& \text{if} \ i^{(k+1)}_c<i^{(k)}_a,\\
 \displaystyle
 {\thi \left( \dfrac{x\hbar^{1 -p_{I,k+1}(i_a^{(k)})}
 \mu_{k+1} }{ \mu_{j(I,k,a)} } \right)} , &
 \text{if} \ i^{(k+1)}_c=i^{(k)}_a, \\
 \thi(x),& \text{if} \ i^{(k+1)}_c>i^{(k)}_a .
\end{cases}
\end{gather*}
Here the index $j(I,k,a) \in \{1, \dots, n \}$ is defined such that
\begin{gather*}
I_{j (I, k, a)} = i_a^{(k)},
\end{gather*}
and
\begin{gather} \label{p-function}
p_{I, j} (m) = \begin{cases}
 1, & I_j <m, \\
 0 , & I_j \geq m.
\end{cases}
\end{gather}

For a permutation $\sigma\in \mathfrak{S}_n$ we also define the elliptic weight function
\begin{gather*}
W_{\sigma,I}(\ttt,\zz,\hbar,\mub) :=W_{\sigma^{-1}(I)}(\ttt,\zz_{\sigma},\hbar,\mub).
\end{gather*}
Of particular importance will be the weight function corresponding to the longest permutation $\sigma_0=(n,n-1,\dots,2,1) \in \mathfrak{S}_n$.

Define
\begin{gather} \label{resmat}
A^{\sigma}_{I,J}(\zz,\mub) = W_{\sigma, I} (\zz_{J},\zz,h,\mub),
\end{gather}
the matrix of restrictions of elliptic weight functions to fixed points. For $\sigma=\id$ we will abbreviate it to $A_{I,J}(\zz,\mub)$.

\subsection{Properties of weight functions and restriction matrices}
The elliptic weight functions enjoy several interesting combinatorial identities. Here we list some of them which will be used below. A more detailed exposition can be found in~\cite{Rim,RTV}.

Let us set
\begin{gather*}
P_{I}(z_1,\dots,z_n)=\prod\limits_{{1\leq k <l\leq n}\atop {I_{l}<I_{k}}} \thi\left(\frac{\hbar z_{I_{l}}}{z_{I_{k}}}\right)
\prod\limits_{{1\leq k <l\leq n}\atop {I_{l}>I_{k}}} \thi\left(\frac{ z_{I_{l}}}{z_{I_{k}}}\right).
\end{gather*}
This function satisfies the following property:
\begin{Lemma} \label{pprop}
\begin{gather*}
P_{\sigma_0 \cdot I \cdot \sigma_{0}}\big(z^{-1}_{\sigma_{0}(1)},\dots, z^{-1}_{\sigma_{0}(n)}\big)=P_{I}(z_1,\dots,z_n).
\end{gather*}
\end{Lemma}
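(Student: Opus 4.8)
The plan is to track how each factor $\vartheta(\hbar z_{I_l}/z_{I_k})$ or $\vartheta(z_{I_l}/z_{I_k})$ in the product $P_{\sigma_0\cdot I\cdot\sigma_0}\big(z^{-1}_{\sigma_0(1)},\dots,z^{-1}_{\sigma_0(n)}\big)$ corresponds to a factor in $P_I(z_1,\dots,z_n)$, under the involution that simultaneously reverses the entries of the permutation $I$ (pre- and post-composing with the longest element $\sigma_0$) and inverts the arguments $z_m\mapsto z^{-1}_{\sigma_0(m)}=z^{-1}_{n+1-m}$. First I would spell out the composed permutation: if $J=\sigma_0\cdot I\cdot\sigma_0$, then $J_m=n+1-I_{n+1-m}$ for $1\le m\le n$. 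Thus a pair of positions $1\le k<l\le n$ for $J$ corresponds to the pair of positions $k'=n+1-l<l'=n+1-k$ for $I$ (note the inequality direction is preserved because $\sigma_0$ reverses order), and the comparison $J_l \lessgtr J_k$ becomes $I_{k'} \lessgtr I_{l'}$ — again with the inequality reversed, since $J_l<J_k \iff n+1-I_{k'} < n+1-I_{l'} \iff I_{l'} < I_{k'}$.

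Next I would evaluate the argument. In $P_J$ the variable $z_a$ is to be replaced by $z^{-1}_{\sigma_0(a)}=z^{-1}_{n+1-a}$, so the factor indexed by $(k,l)$ contributes a theta of $z^{-1}_{n+1-J_l}/z^{-1}_{n+1-J_k}$ (times $\hbar$ in the appropriate case) $= z_{n+1-J_k}/z_{n+1-J_l}$. Now $n+1-J_k = n+1-(n+1-I_{l'}) = I_{l'}$ and similarly $n+1-J_l = I_{k'}$, so this equals $z_{I_{l'}}/z_{I_{k'}}$ — exactly the argument appearing in $P_I$ for the pair $(k',l')$. It remains to match the $\hbar$-placement: the pair $(k,l)$ lands in the first product of $P_J$ (the one with $\hbar$) precisely when $J_l<J_k$, which by the computation above is equivalent to $I_{l'}<I_{k'}$, i.e., the pair $(k',l')$ lands in the first product of $P_I$ (also the one with $\hbar$) — here I use that for positions $k'<l'$ the $\hbar$-product is the one where $I_{l'}<I_{k'}$. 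So each factor of $P_J$ after the substitution equals the corresponding factor of $P_I$, and the bijection $(k,l)\leftrightarrow(k',l')=(n+1-l,n+1-k)$ on the index set $\{1\le k<l\le n\}$ is clearly an involution, hence the two products agree.

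I do not expect a genuine obstacle here; the content is entirely bookkeeping. The one place to be careful is the chain of sign/inequality reversals: reversing positions via $\sigma_0$ on the \emph{right} flips which position is smaller, reversing values via $\sigma_0$ on the \emph{left} flips which value is larger, and inverting the $z$'s flips the ratio inside each theta — and all three flips must be combined correctly so that the net effect is the identity. Using $\vartheta(1/x)=-\vartheta(x)$ one should double-check that no residual sign survives: since the involution on pairs leaves the index set $\{k<l\}$ invariant and the argument matches on the nose (not up to inversion), no $\vartheta(1/x)=-\vartheta(x)$ correction is actually needed, so there is no sign to worry about. A clean way to present this is to first record the explicit formula $J_m=n+1-I_{n+1-m}$, then state the pairing $(k,l)\mapsto(n+1-l,n+1-k)$, and verify in one line each of: (i) it is an order-preserving bijection of $\{k<l\}$, (ii) the theta argument is preserved, (iii) the $\hbar$ vs.\ no-$\hbar$ dichotomy is preserved.
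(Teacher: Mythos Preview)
Your proof is correct and is precisely the ``direct computation'' that the paper alludes to but does not spell out: you correctly identify $J_m=n+1-I_{n+1-m}$, set up the involution $(k,l)\mapsto(n+1-l,n+1-k)$ on ordered pairs, and verify that both the theta arguments and the $\hbar$/no-$\hbar$ dichotomy are preserved. There is no difference in approach, only in the level of detail.
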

\begin{proof}By direct computation.
\end{proof}

\begin{Lemma} \label{hollem}For the dominance order on permutations, the matrix $A_{I,J}(\zz,\mub)$ is lower triangular, i.e.,
\begin{gather*}
A_{I,J}(\zz,\mub)=0, \qquad {\rm if} \quad J \succ I
\end{gather*}
and the diagonal elements are given by
\begin{gather} \label{diagonal}
A_{I,I}(\zz,\mub)=(-1)^{I} P_{I}(z_1,\dots,z_n) P_{I^{-1}\cdot \sigma_{0}} (\mu_{\sigma_{0}(1)},\dots,\mu_{\sigma_{0}(n)}),
\end{gather}
where $(-1)^{I}$ stands for the parity of the permutation $I$. The matrix functions $A_{I,J}(\zz,\mub)$ are holomorphic in all variables $\zz$, $\hbar$, $\mub$.
\end{Lemma}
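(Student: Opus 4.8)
The plan is to expand the symmetrization in \eqref{WI2} and evaluate $A_{I,J}(\zz,\mub)=W_I(\zz_J,\zz,\hbar,\mub)$ term by term after the substitution \eqref{fpres}. Writing out \eqref{WI2}, one gets a sum over tuples $\boldsymbol\pi=(\pi^{(1)},\dots,\pi^{(n-1)})$, $\pi^{(k)}\in\mathfrak S_k$, in which the slot occupied by $t^{(k)}_a$ in $U_I$ receives the value $z_{i^{(k)}_{\pi^{(k)}(a)}(J)}$, where $\{i^{(k)}_1(J)<\dots<i^{(k)}_k(J)\}=\{J_1,\dots,J_k\}$, the top level being frozen by $t^{(n)}_i=z_i$. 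Each denominator factor $\thi\big(t^{(k)}_a\hbar/t^{(k)}_b\big)\thi\big(t^{(k)}_b/t^{(k)}_a\big)$ then becomes $\thi(z_m\hbar/z_{m'})\thi(z_{m'}/z_m)$ with $m\neq m'$, hence nonzero for generic $\zz,\hbar$, and the branches $\thi(\hbar x)$ and $\thi\big(x\hbar^{1-p_{I,k+1}(i^{(k)}_a)}\mu_{k+1}/\mu_{j(I,k,a)}\big)$ of $\psi_{I,k,a,c}$ are likewise nonzero for generic parameters. Consequently a $\boldsymbol\pi$-term vanishes if and only if some factor of the remaining branch vanishes, i.e.\ there are $k,a,c$ with $i^{(k+1)}_c>i^{(k)}_a$ (the ordered indices of $I$, as in the definition of $\psi$) for which the values at the $(k,a)$- and $(k+1,c)$-slots coincide, $i^{(k)}_{\pi^{(k)}(a)}(J)=i^{(k+1)}_{\pi^{(k+1)}(c)}(J)$, producing a factor $\thi(1)=0$.

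For the triangularity I would show that $J\succ I$ forces every tuple $\boldsymbol\pi$ to contain such a vanishing configuration. Tracking the positions of the values $z_m$ through the nested index sets $\{J_1\}\subset\{J_1,J_2\}\subset\dots\subset\{J_1,\dots,J_n\}$, the dominance hypothesis guarantees, for some level $k$, a slot $(k,a)$ whose value also sits at a slot $(k+1,c)$ with $i^{(k+1)}_c>i^{(k)}_a$; this kills the term, and summing over $\boldsymbol\pi$ gives $A_{I,J}(\zz,\mub)=0$ whenever $J\succ I$.

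For the diagonal I would show that for $J=I$ only the identity tuple $\pi^{(k)}=\id$ survives — any nontrivial $\pi^{(k)}$ reshuffles the ordered values and hence creates a crossing pair between consecutive levels, i.e.\ a factor $\thi(1)$ — and then evaluate the surviving term $U_I\big|_{t^{(k)}_a=z_{i^{(k)}_a(I)}}$ explicitly. Its denominator is a product of factors $\thi(\hbar z_{i^{(k)}_a}/z_{i^{(k)}_b})\thi(z_{i^{(k)}_b}/z_{i^{(k)}_a})$ which, together with the $\thi(\hbar x)$- and $\thi(x)$-branches of the numerator, reorganizes — using $\thi(1/x)=-\thi(x)$ to reconcile orientations, by induction on $k$ — into $(-1)^I P_I(z_1,\dots,z_n)$; the diagonal branches $\psi_{I,k,a,c}$ with $i^{(k+1)}_c=i^{(k)}_a$ contribute $\prod\thi\big(\hbar^{1-p_{I,k+1}(i^{(k)}_a)}\mu_{k+1}/\mu_{j(I,k,a)}\big)$, which after rewriting the indices via the definitions of $j(I,k,a)$, $p_{I,j}$ in \eqref{p-function} and of $\sigma_0$ becomes precisely $P_{I^{-1}\cdot\sigma_0}(\mu_{\sigma_0(1)},\dots,\mu_{\sigma_0(n)})$. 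This yields \eqref{diagonal}.

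Finally, for holomorphicity: each $\boldsymbol\pi$-term is a ratio of theta functions whose only possible poles in $\zz,\hbar$ lie along walls $z_m\in z_{m'}q^{\Z}$ or $\hbar z_m\in z_{m'}q^{\Z}$ coming from the denominator, and I would check in the standard way that the residues cancel in pairs under $\Sym$ (the transposition $(a\,b)\in\mathfrak S_k$ swapping two level-$k$ roots exchanges two contributions with opposite residue), so that $A_{I,J}(\zz,\mub)$ extends holomorphically in all of $\zz,\hbar,\mub$; alternatively one invokes the corresponding property of elliptic weight functions from \cite{RTV,Rim}. I expect the combinatorial bookkeeping of the triangularity step — turning the inequality $J\succ I$ into a guaranteed vanishing crossing inside every symmetrization term — to be the main obstacle; the diagonal computation, though lengthy, is a routine regrouping of theta-function products, and the residue cancellation is standard for weight functions.
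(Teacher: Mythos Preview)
The paper does not prove this lemma at all: its entire proof is the single sentence ``Lemmas~2.4, 2.5 and 2.6 in~\cite{RTV}.'' So there is no approach to compare against beyond that citation.

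Your outline is the standard way such statements are established for weight functions and is, in spirit, what the cited lemmas in~\cite{RTV} carry out. A few remarks on the individual pieces. For the diagonal, your claim that only the identity tuple $\boldsymbol\pi$ survives is correct in this full-flag situation, but the one-line justification (``any nontrivial $\pi^{(k)}$ creates a crossing pair'') hides a small induction: one should argue level by level that a nonvanishing term forces $\pi^{(k+1)}$ to restrict to $\pi^{(k)}$ on the common indices, hence all $\pi^{(k)}$ are identity. For triangularity you have correctly identified the crux and honestly flagged it as the main obstacle; turning $J\succ I$ into a guaranteed $\thi(1)$ factor in \emph{every} symmetrization term is exactly the combinatorial lemma one has to prove, and your sketch does not yet supply that argument. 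For holomorphicity, the pairwise residue cancellation under the level-$k$ transposition is indeed the standard mechanism; note, though, that the weight functions themselves are already holomorphic in $\ttt,\zz,\hbar,\mub$ before restriction (this is what \cite{RTV} shows), so the substitution $\ttt=\zz_J$ introduces no new issues.

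In short: nothing in your plan is wrong, and it would reproduce the content of the cited lemmas, but as written the triangularity step is an assertion rather than an argument. If you want a self-contained proof you must actually carry out that combinatorics; otherwise, citing \cite{RTV} as the paper does is the honest option.
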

\begin{proof}Lemmas 2.4, 2.5 and 2.6 in \cite{RTV}.
\end{proof}

Let us consider the elliptic dynamical $R$-matrix in the Felder's normalization
\begin{gather*}
R^{j,j}_{j,j}(x,\mub)=1, \qquad R^{j,k}_{j,k}(x,\mub)=\dfrac{\thi(x) \thi \left( \dfrac{\hbar \mu_j}{\mu_{k}} \right)}{\thi(x \hbar) \thi \left( \dfrac{\mu_j}{\mu_k} \right)}, \qquad R^{j,k}_{k,j}(x,\mub)=\dfrac{\thi \left( \dfrac{x \mu_j}{\mu_k} \right) \thi(\hbar)}{\thi(x \hbar) \thi \left( \dfrac{\mu_j}{\mu_k} \right)} ,
\end{gather*}
where $1\leq j, k\leq n$, $j\neq k$.

\begin{Lemma}The weight functions \eqref{WI2} satisfy the following recursive relations
\begin{gather*}
W_{I \cdot s_{k}}^{z_k\leftrightarrow z_{k+1}}=R^{a,b}_{a,b} \left( \dfrac{z_k}{z_{k+1}} \right) W_{I}+R^{b,a}_{a,b} \left( \dfrac{z_k}{z_{k+1}} \right) W_{I \cdot s_k},
\end{gather*}
where $a := I^{-1} (k)$, $b := I^{-1} (k+1)$, and $s_k$ denotes the transposition $(k,k+1)$. The superscript $z_k\leftrightarrow z_{k+1}$ denotes the function in which $z_k$ is substituted by $z_{k+1}$ and $z_{k+1}$ by $z_{k}$.
\end{Lemma}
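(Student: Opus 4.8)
The plan is to establish the recursion by a direct computation from the explicit formula for the weight functions, arranged so that the identity becomes \emph{localised}, and then to reduce the general case to a rank-two computation using the level-by-level recursive structure of the $W_I$'s. Write $a=I^{-1}(k)$ and $b=I^{-1}(k+1)$; I would carry out the case $a<b$ in detail and remark that $a>b$ is the same with the evident changes in the intermediate bookkeeping. The first thing to record is that the equivariant parameters $z_k,z_{k+1}$ enter $U_I(\ttt,\zz,\hbar,\mub)$ only through the convention $t^{(n)}_i=z_i$, hence only inside the level $(n-1)$ numerator $\prod_{a,c}\psi_{I,n-1,a,c}\bigl(z_c/t^{(n-1)}_a\bigr)$; the denominators and all numerators of levels $\le n-2$ see only $\ttt$, $\hbar$, $\mub$ and the combinatorial data of $I$. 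Comparing that data for $I$ and for $I\cdot s_k$: the latter differs from $I$ by interchanging the two consecutive values $k$ and $k+1$, which occupy positions $a$ and $b$ of $I$, so the ordered label sets $\{i^{(m)}_1<\dots<i^{(m)}_m\}$ agree for all levels $m\notin[a,b)$ and, for $m\in[a,b)$, differ only by a single label $k$ being replaced by $k+1$. Feeding this into the case distinction defining $\psi_{I,k,a,c}$, one sees that the branches $<,=,>$ are selected identically at every level except $m=\max(a,b)-1$, where one factor $\thi(\,\cdot\,)$ turns into $\thi(\hbar\,\cdot\,)$, while the only remaining change is a transposition $\mu_a\leftrightarrow\mu_b$ at the levels $m\ge\max(a,b)$. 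Thus $W_I$, $W_{I\cdot s_k}$ and the substitution $z_k\leftrightarrow z_{k+1}$ all differ only inside the top block of levels.

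Next I would pin down the rank-two base case, the $n=2$ statement for $T^*\P^1$, where there is no symmetrisation and $W_{(1,2)}$, $W_{(2,1)}$ are single products of two $\thi$'s. Substituting the explicit expressions and clearing the common theta factors reduces the asserted relation to the three-term (Fay) addition identity for $\thi$, which, after tracking the dynamical data in $\psi$ — the exponents $\hbar^{1-p_{I,k+1}(\,\cdot\,)}$ and the arguments $\mu_{k+1}/\mu_{j(I,k,a)}$ — produces exactly the coefficients $R^{a,b}_{a,b}(z_k/z_{k+1})$ and $R^{b,a}_{a,b}(z_k/z_{k+1})$ in Felder's normalisation. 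This step is a finite computation using only $\thi(1/x)=-\thi(x)$, the quasi-periodicity $\thi(qx)/\thi(x)=-q^{-1/2}x^{-1}$, and the addition theorem for $\thi$.

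For general $n$ I would pass from this base case to the full statement via the standard recursive (level-by-level / last-tensor-factor) construction of the weight functions: by the localisation above, ``$s_k$ together with $z_k\leftrightarrow z_{k+1}$'' affects only the top block of levels and is governed there by the rank-two computation, while the coefficients $R^{a,b}_{a,b}$, $R^{b,a}_{a,b}$ are inert under the reduction since they involve only $z_k/z_{k+1}$, $\hbar$, $\mu_a$, $\mu_b$. I expect the genuine obstacle to be the combination of (a) the base-case theta identity itself and (b) the bookkeeping of the dynamical $\mub$-shifts through the recursive reassembly — both sides being, after the symmetrisations, alternating sums of products of $\thi$'s that must be matched by the correct partial-fraction/addition-theorem decomposition, with a final check that the coefficients carry precisely the indices $a=I^{-1}(k)$, $b=I^{-1}(k+1)$ in the stated order; this is exactly the computation performed in \cite{RTV}. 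Alternatively, if one is willing to invoke the axiomatic characterisation underlying the associated elliptic stable envelopes (cf.\ Theorem~\ref{introthm}), one could instead verify that both sides of the asserted relation are sections of the same line bundle with the same leading behaviour, the latter supplied by Lemma~\ref{hollem}, which bypasses the explicit symmetrisation altogether.
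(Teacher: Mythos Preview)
The paper does not actually prove this lemma: its entire proof is a one-line citation of Theorem~2.2 in~\cite{RTV}. Your proposal therefore goes well beyond what the paper does, and the overall shape you describe---reduce to a three-term (Fay) identity for~$\thi$ in a rank-two base case and then propagate through the recursive last-tensor-factor construction of the~$W_I$---is indeed the strategy carried out in~\cite{RTV}.

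That said, the localisation step as you state it does not go through. The swap $z_k\leftrightarrow z_{k+1}$ touches only the level-$(n-1)$ numerator via $t^{(n)}_i=z_i$, whereas the passage $I\to I\cdot s_k$ alters the $\psi$-factors at level $\max(a,b)-1$ (besides the $\thi(\,\cdot\,)\to\thi(\hbar\,\cdot\,)$ you note, the exponent $\hbar^{1-p}$ on the diagonal factor also flips there) and swaps $\mu_a\leftrightarrow\mu_b$ in the $=$-branches at \emph{every} level $m\ge\max(a,b)$. The changes are thus spread over the whole range of levels from $\max(a,b)-1$ through $n-1$, not confined to a single ``top block'' governed by an $n=2$ computation; so your sentence ``is governed there by the rank-two computation'' is not justified by the analysis preceding it. The mechanism that actually makes the reduction work in~\cite{RTV} is an induction on~$n$ via the recursive definition of the weight functions (adding one equivariant parameter at a time), with the rank-two Fay identity applied at the inductive step and the dynamical $\mub$-shifts tracked through; you mention this (``level-by-level / last-tensor-factor'') but the bookkeeping you present does not yet set up that induction. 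Your closing alternative---match both sides axiomatically as sections of the same line bundle---is a legitimate independent route, but it is not what either this paper or~\cite{RTV} does for this lemma.
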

\begin{proof}Theorem 2.2 in \cite{RTV}.
\end{proof}

We can reformulate those as relations among the matrix elements of the restriction matrix.

\begin{Corollary} \label{cor1}The elements of the restriction matrix satisfy the following relations:
\begin{gather} \label{rmatrel}
A_{I\cdot s_{k} ,J \cdot s_{k}}(\zz,\mub)^{z_{k}\leftrightarrow z_{k+1}}=R^{a,b}_{a,b} \left( \dfrac{z_k}{z_{k+1}} \right) A_{I,J}(\zz,\mub) + R^{b,a}_{a,b} \left( \dfrac{z_k}{z_{k+1}} \right) A_{I \cdot s_k, J}(\zz,\mub).
\end{gather}
\end{Corollary}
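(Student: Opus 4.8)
The plan is to deduce Corollary~\ref{cor1} directly from the preceding Lemma (Theorem~2.2 of~\cite{RTV}) by restricting the recursive $R$-matrix relation for weight functions to a fixed point $J$. Recall that by definition~(\ref{resmat}), $A_{I,J}(\zz,\mub) = W_{I}(\zz_J,\zz,\hbar,\mub)$, i.e., the matrix element is obtained by substituting the Chern roots $\ttt$ with their restricted values $\zz_J = \big(t^{(k)}_a = z_{i^{(k)}_a}\big)$ determined by the fixed point $J$ via~(\ref{ordset}). The crucial point is that this substitution of Chern roots commutes with the operation $z_k \leftrightarrow z_{k+1}$ appearing in the recursion, once one accounts for the relabelling of the fixed point.

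First I would make precise the identity $W_{I\cdot s_k}^{z_k \leftrightarrow z_{k+1}}\big|_{\ttt = \zz_J} = W_{I\cdot s_k}\big|_{\ttt = \zz_{J\cdot s_k}}^{z_k \leftrightarrow z_{k+1}}$, which holds because swapping $z_k \leftrightarrow z_{k+1}$ in the arguments together with the corresponding change $J \mapsto J \cdot s_k$ of the fixed point leaves the set of substitution values $\{t^{(l)}_a = z_{i^{(l)}_a}\}$ invariant up to this same swap: if $J$ records which coordinate lines span $V_l(J)$, then $J \cdot s_k$ records the same flag with the roles of $e_k$ and $e_{k+1}$ interchanged, so $\zz_{J\cdot s_k}$ is exactly $\zz_J$ with $z_k$ and $z_{k+1}$ swapped. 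Hence $W_{I\cdot s_k}(\ttt,\zz,\hbar,\mub)^{z_k\leftrightarrow z_{k+1}}\big|_{\ttt=\zz_J} = A_{I\cdot s_k, J\cdot s_k}(\zz,\mub)^{z_k\leftrightarrow z_{k+1}}$ after matching conventions for the ordering of Chern roots. I would then simply evaluate both sides of the weight-function recursion at $\ttt = \zz_J$: the left side becomes the left side of~(\ref{rmatrel}), and since the $R$-matrix coefficients $R^{a,b}_{a,b}(z_k/z_{k+1})$ and $R^{b,a}_{a,b}(z_k/z_{k+1})$ depend only on $\zz$ and $\mub$ (not on $\ttt$), they pass through the substitution unchanged, producing $R^{a,b}_{a,b}(z_k/z_{k+1}) A_{I,J}(\zz,\mub) + R^{b,a}_{a,b}(z_k/z_{k+1}) A_{I\cdot s_k, J}(\zz,\mub)$ on the right.

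I expect the main obstacle to be bookkeeping rather than conceptual: one must verify that the implicit choice of ordering of the Chern roots $t^{(l)}_1, \dots, t^{(l)}_l$ — which enters both the definition of $W_I$ and the restriction formula~(\ref{restr})–(\ref{fpres}) — is handled consistently on both sides, so that the substitution $\ttt = \zz_J$ into $W_{I\cdot s_k}^{z_k\leftrightarrow z_{k+1}}$ genuinely yields $A_{I\cdot s_k, J\cdot s_k}^{z_k\leftrightarrow z_{k+1}}$ and not some permuted variant. Since $W_I$ is symmetric in each group $\ttt^{(l)}$ (it is built by the symmetrization $\Sym_{\ttt^{(l)}}$ in~(\ref{WI2})), the ambiguity in ordering is in fact immaterial for the value, which resolves this issue. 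The indices $a = I^{-1}(k)$, $b = I^{-1}(k+1)$ carry over verbatim from the Lemma. Thus the corollary follows, and I would present it in three or four lines as a formal consequence of the cited theorem together with the compatibility of fixed-point restriction with permutation of equivariant parameters.
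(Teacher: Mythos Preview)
Your proposal is correct and matches the paper's approach: the Corollary is stated without proof as an immediate reformulation of the preceding Lemma, and your argument spells out precisely the one thing that needs checking, namely that substituting $\ttt=\zz_J$ into the weight-function recursion and then using $(s_k\zz)_{J\cdot s_k}=\zz_J$ (as multisets per level, harmless by the $\Sym_{\ttt^{(l)}}$ in~(\ref{WI2})) yields~(\ref{rmatrel}). There is nothing more to add.
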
The identity \eqref{rmatrel} can be used to compute recursively all matrix elements $A_{I,J}(\zz,\mub)$ from the known diagonal entries~(\ref{diagonal}):
\begin{Lemma} \label{reclem}
The restriction matrix $A_{I,J}(\zz,\mub)$ is the unique lower triangular matrix $($in the basis of indexes $I$ ordered by $\succ)$ with the diagonal elements given by~\eqref{diagonal} satisfying the $R$-matrix relations~\eqref{rmatrel}.
\end{Lemma}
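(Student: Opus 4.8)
The plan is to show two things: first, that the stated data (lower triangularity in the $\succ$ order, the diagonal \eqref{diagonal}, and the $R$-matrix relations \eqref{rmatrel}) are enough to pin down every entry $A_{I,J}$; and second, that consistency is automatic, so the system is not over-determined. For the uniqueness part I would argue by induction on the length $\ell(\sigma)$ of a permutation $\sigma$ with $I\cdot\sigma$ appearing as the first index. Concretely, fix the column index $J$ and order the row indices $I$ by $\succ$. The diagonal entry $A_{J,J}$ is given outright by \eqref{diagonal}. Now suppose we have determined all entries $A_{I',J}$ with $\ell(I')<\ell(I)$ (measuring length of $I$ relative to the minimal element in its coset, or more simply: all entries strictly above $A_{I,J}$ in the $\succ$ order have been determined). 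Pick a simple transposition $s_k$ with $\ell(I\cdot s_k)<\ell(I)$ — i.e., $s_k$ takes $I$ one step down in the Bruhat/dominance order. Apply \eqref{rmatrel} with the roles arranged so that the left-hand side is (a $z$-swap of) $A_{I,J\cdot s_k}$ or $A_{I,J}$; after solving the two-term linear relation for the desired entry, one expresses $A_{I,J}$ in terms of $A_{I\cdot s_k,J}$ and $A_{I\cdot s_k, J\cdot s_k}$, both of which are lower in the order and hence already known. Since $R^{b,a}_{a,b}(x,\mub)$ is a nonzero meromorphic function, the relation can indeed be solved, and this determines $A_{I,J}$ uniquely.

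One has to be slightly careful about which of the two summands in \eqref{rmatrel} is the "new" unknown: rewriting the Corollary's identity, for $a=I^{-1}(k)$, $b=I^{-1}(k+1)$ with (say) $I\cdot s_k \prec I$, the term $R^{b,a}_{a,b}(z_k/z_{k+1})\,A_{I\cdot s_k,J}$ sits on the right with known factor, and $A_{I\cdot s_k,J\cdot s_k}^{z_k\leftrightarrow z_{k+1}}$ on the left is known by induction (it is strictly lower because both indices drop in the order simultaneously, or because the column index only changes and we can fix the argument by comparing lengths); solving gives $A_{I,J}$. The base of the induction is the set of entries with $I=J$ (the diagonal) and the entries that vanish by lower triangularity (Lemma \ref{hollem}). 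Thus every entry is reached by a chain of such moves starting from a diagonal entry, which proves that at most one matrix satisfies all the constraints.

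For existence, I would simply invoke that the restriction matrix $A_{I,J}(\zz,\mub)$ of the elliptic weight functions, defined in \eqref{resmat}, \emph{does} satisfy all three properties: lower triangularity and the diagonal formula are Lemma \ref{hollem}, and the $R$-matrix relations are Corollary \ref{cor1}. Hence the system is consistent, and combined with the uniqueness argument above, $A_{I,J}(\zz,\mub)$ is \emph{the} unique such matrix. The main obstacle in writing this carefully is purely bookkeeping: one must check that the reduced-word/descent structure of the dominance order really does let every off-diagonal entry be reached by successively applying \eqref{rmatrel} through entries that are already known — i.e., that the "move down by $s_k$" operations generate, starting from the diagonal, all pairs $(I,J)$ with $J\preceq I$ — and that at each step the coefficient one divides by ($R^{b,a}_{a,b}$ or $R^{a,b}_{a,b}$) is not identically zero. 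Both are standard facts about the Bruhat order and the explicit Felder $R$-matrix, so no serious difficulty is expected; it is a matter of phrasing the induction cleanly.
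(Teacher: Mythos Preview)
Your proposal is correct and follows essentially the same route as the paper: induct on the row index $I$ in the dominance order, use \eqref{rmatrel} (solved for the appropriate term) to express $A_{I,J}$ via $A_{I\cdot s_k,J}$ and $A_{I\cdot s_k,J\cdot s_k}$ with $I\cdot s_k\prec I$, and start from the diagonal given by \eqref{diagonal}. The only cosmetic wrinkle is that your phrasing ``fix the column index $J$'' is misleading, since the recursion necessarily mixes columns $J$ and $J\cdot s_k$; the clean formulation---which you effectively reach anyway---is to induct on entire rows, exactly as the paper does.
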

\begin{proof}The proof is by induction on rows of the restriction matrix. The restriction matrix $A_{I,J}(\zz,\mub)$ is lower triangular if~$I$,~$J$ are ordered
by the dominance order $\succ$. Thus, the only nontrivial matrix element in the first row is $A_{\id,\id}(\zz,\mub)$. This matrix element is fixed by (\ref{diagonal}) and thus all elements in the first row are uniquely determined.

Note that (\ref{rmatrel}) can be rewritten as:
\begin{gather*}
A_{I\cdot s_k,J \cdot s_k}(\zz,\mub)= \alpha_{s_k} A_{I,J}(\zz,\mub)^{z_{k}\leftrightarrow z_{k+1}}+\beta_{s_k} A_{I,J \cdot s_k}(\zz,\mub)
\end{gather*}
for certain explicit functions $\alpha_{s_k}$ and $\beta_{s_k}$. For any $I' \neq \id$, there always exists some $k$, such that for $I := I' \cdot s_k$, we have $I' = I\cdot s_k \succ I$. Thus, the last identity is the expression for matrix elements in the $I'$-th row in terms of its values in the previous rows. The result follows by induction.
\end{proof}

\subsection[Dual $R$-matrix relations]{Dual $\boldsymbol{R}$-matrix relations}
Recent results in \cite{RW} show that the matrix elements of the restriction matrices satisfy another recursion, named ``Bott--Samelson recursion'' in~\cite{RW}. We will call this other recursion the ``dual $R$-matrix relations'' and explain later that these relations correspond to $R$-matrix relations on the symplectic dual variety~$X'$.

\begin{Theorem} \label{duthm}The elements of the restriction matrix satisfy the following relations
\begin{gather} \label{dualrel}
A_{s_k \cdot I, s_k \cdot J} (\zz, \mub)^{\mu_k\leftrightarrow \mu_{k+1}}=\tilde{R}^{a,b}_{a,b} A_{I,J}(\zz,\mub) + \tilde{R}^{b,a}_{a,b} A_{I,s_k \cdot J}(\zz,\mub),
\end{gather}
where $a=n-J_k+1$ and $b=n-J_{k+1}+1$ and the coefficients $\tilde{R}^{a,b}_{c,d}$ are related to the coefficients of Felder's $R$-matrix by
\begin{gather} \label{dualR}
\tilde{R}^{a,b}_{c,d}=\left.{R}^{a,b}_{c,d}\right|_{z_i \mapsto \mu_i^{-1} ,\, \mu_i \mapsto z_{\sigma_{0}(i)}}.
\end{gather}
\end{Theorem}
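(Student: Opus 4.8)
The plan is to deduce the dual $R$-matrix relations \eqref{dualrel} from the already-established ordinary $R$-matrix relations \eqref{rmatrel} by combining them with the symmetry of the weight functions under the longest permutation $\sigma_0$. The starting point is the observation that the ``dual'' transposition $I \mapsto s_k\cdot I$ acts on permutations by swapping which positions contain the values $k$ and $k+1$; after conjugating by $\sigma_0$ this becomes an ordinary right multiplication by a simple transposition, but now read in the $\mu$-variables rather than the $z$-variables. Concretely, I would first record the precise combinatorial identity $\sigma_0\cdot(s_k\cdot I) = (\sigma_0\cdot I)\cdot s_{n-k}$ (or the appropriate index shift), so that left multiplication by $s_k$ on $I$ is intertwined, via $\sigma_0$-conjugation, with right multiplication by a simple transposition; this is the structural reason the two recursions are ``mirror'' to each other.

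The key technical input is a duality formula for the weight functions themselves: an identity expressing $W_{\sigma_0\cdot I\cdot\sigma_0}$ evaluated at $(\ttt',\,1/\zz_{\sigma_0},\,\hbar,\,\zz_{\sigma_0})$ — i.e.\ with the roles of $\zz$ and $\mub$ interchanged according to \eqref{dualR} — in terms of $W_I(\ttt,\zz,\hbar,\mub)$, up to an explicit prefactor. The prefactor will be built from the functions $P_I$ of Lemma \ref{pprop}, which is exactly why that lemma was stated; the $\sigma_0$-invariance $P_{\sigma_0\cdot I\cdot\sigma_0}(1/\zz_{\sigma_0}) = P_I(\zz)$ is what makes the normalization consistent under the swap. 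Granting such a formula, one substitutes it into the ordinary $R$-matrix relation \eqref{rmatrel} written for the indices $\sigma_0\cdot I\cdot\sigma_0$, $\sigma_0\cdot J\cdot\sigma_0$ and for the simple reflection $s_{n-k}$ acting on the $z$'s; after the change of variables \eqref{dualR} the left-hand side becomes $A_{s_k\cdot I,\,s_k\cdot J}(\zz,\mub)^{\mu_k\leftrightarrow\mu_{k+1}}$, the $z$-swap $z_{n-k}\leftrightarrow z_{n-k+1}$ turning into the $\mu$-swap $\mu_k\leftrightarrow\mu_{k+1}$, and the Felder $R$-matrix coefficients $R^{a,b}_{c,d}(z_{n-k}/z_{n-k+1})$ become precisely the $\tilde R^{a,b}_{c,d}$ of \eqref{dualR}, with the index identification $a=n-J_k+1$, $b=n-J_{k+1}+1$ emerging from tracking how $\sigma_0$-conjugation relabels the fixed points. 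The remaining task is bookkeeping: checking that the $P_I$-prefactors on all three terms of the transformed \eqref{rmatrel} cancel uniformly, using $\sigma_0$-invariance and the fact that $P_I$ is unchanged under $J\mapsto J\cdot s_k$ or $I\mapsto I\cdot s_k$ in the relevant slots.

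Alternatively — and this may be cleaner, since \cite{RW} already proves the Bott--Samelson recursion directly — one can cite \eqref{dualrel} as a reformulation of the main recursion of \cite{RW} and devote the proof to identifying the coefficients appearing there with the substituted Felder coefficients \eqref{dualR}; in that case the content is matching conventions between the two papers (the normalization of $\thi$, the ordering $\sigma_0$, and the index conventions $a=n-J_k+1$), which is straightforward but must be done carefully. I expect the main obstacle in the first approach to be establishing the weight-function duality formula with the correct prefactor and Chern-root matching: the symmetrization $\Sym_{\ttt^{(1)}}\cdots\Sym_{\ttt^{(n-1)}}$ in \eqref{WI2} does not obviously commute with the $\zz\leftrightarrow\mub$ swap, since that swap mixes the ``equivariant'' and ``dynamical'' roles that the Chern roots mediate, so one has to verify the identity term-by-term on the summand $U_I$ and then argue the symmetrizations match up. Once that formula is in hand, the passage from \eqref{rmatrel} to \eqref{dualrel} is a direct substitution.
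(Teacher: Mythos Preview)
The paper's proof is exactly your second alternative: it cites Theorem~11.1 of \cite{RW} and reduces the argument to an explicit normalization factor relating the weight functions $W_I$ of this paper to the ${\bf w}_I$ of \cite{RW}, after which substituting into equation~(33) of \cite{RW} yields \eqref{dualrel}. So the ``alternatively'' paragraph of your proposal is the actual proof, and its content is indeed just convention-matching.

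Your primary approach, however, has a structural gap. The ``duality formula for the weight functions themselves'' that you posit as the key input is not available: the Chern roots $\ttt$ enter $W_I(\ttt,\zz,\hbar,\mub)$ completely asymmetrically with respect to $\zz$ and $\mub$ (the $z_i$ appear as the boundary values $t^{(n)}_i=z_i$, while the $\mu_i$ appear only inside the dynamical factors $\psi_{I,k,a,c}$), so no reasonable bijection $\ttt\mapsto\ttt'$ will intertwine the two symmetrizations. The $\zz\leftrightarrow\mub$ symmetry only holds \emph{after} restricting the Chern roots to fixed points---that is, at the level of $A_{I,J}$---and that statement is precisely Theorem~\ref{duthrm}. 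But in the paper's architecture Theorem~\ref{duthrm} is proved \emph{using} Theorem~\ref{duthm} (via Lemma~\ref{dulem} and Proposition~\ref{prop1}): one shows that both $A$ and its $\sigma_0$-transposed, variable-swapped version satisfy \eqref{dualrel} with the same diagonal, then invokes uniqueness. So appealing to the restriction-matrix symmetry to derive \eqref{dualrel} would be circular. The dual recursion really is independent input from \cite{RW}; the symmetry \eqref{mainsym} is its \emph{consequence}, not a tool available to prove it.
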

\begin{proof}This identity is equivalent to Theorem~11.1 in~\cite{RW}. Indeed, direct computations show that the weight functions
${\bf w}_{I}$ used in \cite{RW} differ from the one used in the present paper by a~factor
\begin{gather*}
W_I = {\bf w}_{I} \cdot C \left( \dfrac{\thi (\hbar)}{\thi' (1)} \right)^{\sharp \{ (i,j) \,|\, 1\leq i<j\leq n, \, I_i > I_j \} } \prod_{1\leq i<j\leq n} \thi \left( \hbar^{1- p_{j-1} (i)} \dfrac{\mu_j}{\mu_i} \right) ,
\end{gather*}
where $C$ a constant independent of $I$, and $p_{j-1} (i)$ is given by~(\ref{p-function}). Substituting this to the equation~(33) of~\cite{RW}, we arrive at~(\ref{dualrel}).
\end{proof}

The following Lemma and its proof is analogous to Lemma~\ref{reclem}.
\begin{Lemma} \label{dulem} The restriction matrix $A_{I,J}(\zz,\mub)$ is the unique lower triangular matrix $($in the basis of indexes $I$ ordered by $\succ)$ with diagonal elements given by~\eqref{diagonal} satisfying the recursive relations~\eqref{dualrel}.
\end{Lemma}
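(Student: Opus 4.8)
The plan is to run exactly the induction used in the proof of Lemma~\ref{reclem}, with the $R$-matrix relations~\eqref{rmatrel} replaced by the dual $R$-matrix relations~\eqref{dualrel} of Theorem~\ref{duthm}, and the right action $I\mapsto I\cdot s_k$ on one-line notation replaced by the left action $I\mapsto s_k\cdot I$. First I would record the triangularity input from Lemma~\ref{hollem}: the matrix $A_{I,J}(\zz,\mub)$ is lower triangular for the dominance order $\succ$, and $\id=(1,2,\dots,n)$ is the unique minimum of $\succ$ on $\mathfrak S_n$ (for every $w$ and every $j$ one has $\sum_{i\le j}w_i\ge j(j+1)/2=\sum_{i\le j}\id_i$). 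Hence the row indexed by $\id$ contains the single possibly nonzero entry $A_{\id,\id}(\zz,\mub)$, which is pinned down by formula~\eqref{diagonal}; this is the base of the induction.

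Next I would put~\eqref{dualrel} into a form that expresses a later row in terms of an earlier one. Since the substitution $\mu_k\leftrightarrow\mu_{k+1}$ is an automorphism of the ring of parameters, applying it to both sides of~\eqref{dualrel} isolates $A_{s_k\cdot I,\,s_k\cdot J}(\zz,\mub)$ on the left and produces
\begin{gather*}
A_{s_k\cdot I,\,s_k\cdot J}(\zz,\mub)=\tilde\alpha_{s_k}\,A_{I,J}(\zz,\mub)^{\mu_k\leftrightarrow\mu_{k+1}}+\tilde\beta_{s_k}\,A_{I,\,s_k\cdot J}(\zz,\mub)^{\mu_k\leftrightarrow\mu_{k+1}},
\end{gather*}
where $\tilde\alpha_{s_k},\tilde\beta_{s_k}$ are the explicit functions obtained from the coefficients~\eqref{dualR} of the dual $R$-matrix after the $\mu$-swap. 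The key point is that both entries on the right lie in the single row $I$, so — unlike in Lemma~\ref{reclem}, where the later row also appears on the right and a $2\times2$ $R$-matrix block must be inverted — here the later row $s_k\cdot I$ is expressed directly in terms of the row $I$.

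The combinatorial heart is then: for any $I'\neq\id$ there is an index $k$ with $I'_k>I'_{k+1}$, and for such $k$ the permutation $I:=s_k\cdot I'$ satisfies $s_k\cdot I=I'$, $I_k=I'_{k+1}<I'_k=I_{k+1}$, and $I\prec I'$ — indeed left multiplication by $s_k$ swaps the entries in positions $k$ and $k+1$, which strictly lowers the $k$-th partial sum and leaves all other partial sums unchanged. Hence the displayed identity computes every entry of row $I'$ of the matrix at parameters $(\zz,\mub)$ from the entries of the strictly earlier row $I$ at parameters with $\mu_k$ and $\mu_{k+1}$ interchanged, which are already known as functions by the inductive hypothesis. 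Inducting over the rows ordered by any linear refinement of $\succ$, with base case $\id$, shows the whole matrix is uniquely determined, which is the assertion.

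The only real obstacle is bookkeeping rather than substance: one must make sure the conventions for $s_k\cdot I$ versus $I\cdot s_k$, for the substitution~\eqref{dualR}, and for the index assignments $a=n-J_k+1$, $b=n-J_{k+1}+1$ in~\eqref{dualrel} are mutually consistent, so that the matrix is genuinely lower triangular for the order along which the dual recursion decreases and so that the left-hand side of~\eqref{dualrel} really carries coefficient $1$ after the $\mu$-swap. Once these conventions are fixed, the argument is a line-by-line transcription of the proof of Lemma~\ref{reclem}.
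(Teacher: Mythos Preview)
Your proof is correct and follows exactly the approach the paper intends: the paper's entire proof of Lemma~\ref{dulem} is the sentence ``analogous to Lemma~\ref{reclem}'', and you have spelled out that analogy accurately --- row-by-row induction with base case $I=\id$, the descent argument to find $k$ with $s_k\cdot I'\prec I'$, and the rewriting of~\eqref{dualrel} via the $\mu_k\leftrightarrow\mu_{k+1}$ swap. Your side observation that the dual recursion is structurally cleaner than~\eqref{rmatrel} (both right-hand terms already sit in row~$I$, so no $2\times2$ solve is needed) is correct and a nice remark, though it is not needed for the argument.
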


\begin{Note} We found that the matrix elements $A_{I,J}(\zz,\mub)$ can be computed in two different ways: using recursion (\ref{rmatrel}) or recursion~(\ref{dualrel}). This fact provides a set of highly nontrivial identities for elliptic functions. We give several examples of these identities in Section~\ref{exsection}, see also~\cite[Section 9]{RW}. In general, these identities can be formulated as Theorem \ref{duthrm} below.
\end{Note}

The recursive relations (\ref{rmatrel}) and (\ref{dualrel}) are closely related:

\begin{Proposition} \label{prop1} Let $A_{I,J}(\zz,\mub)$ be a matrix satisfying relations~\eqref{rmatrel}. Let $B_{I,J}(\zz,\mu)$ be the matrix defined by
\begin{gather} \label{invtrans}
B_{I,J}(z_1,\dots, z_n,\mu_1,\dots,\mu_n)=A_{\sigma_{0}\cdot J^{-1},\sigma_{0}\cdot I^{-1}}\big(\mu_1^{-1},\dots,\mu_{n}^{-1},z_{\sigma_{0}(1)},\dots,z_{\sigma_{0}(n)}\big).
\end{gather}
Then the matrix $B_{I,J}(\zz,\mub)$ satisfies the relations~\eqref{dualrel}.
\end{Proposition}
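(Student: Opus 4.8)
The plan is to verify directly that the matrix $B_{I,J}$ obtained from $A_{I,J}$ by the substitution~\eqref{invtrans} satisfies the dual relations~\eqref{dualrel}, by reducing those relations to the ordinary $R$-matrix relations~\eqref{rmatrel} for $A$. The bookkeeping is organized around the involution $\sigma_0$ and the inversion map on permutations, so the first step is to unwind carefully how the index substitutions $I\mapsto \sigma_0\cdot I^{-1}$ and the variable substitutions $z_i\mapsto\mu_i^{-1}$, $\mu_i\mapsto z_{\sigma_0(i)}$ interact with the elementary moves appearing in the two recursions.

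First I would record the elementary combinatorial facts. Under $I\mapsto \sigma_0\cdot I^{-1}$, left multiplication by $s_k$ on $I$ corresponds to \emph{right} multiplication by $s_k$ on $\sigma_0\cdot I^{-1}$, and conjugating by $\sigma_0$ turns the transposition $s_k=(k,k+1)$ into $s_{n-k}=(n-k,n-k+1)$; more precisely $\sigma_0\cdot(s_k\cdot I)^{-1} = \sigma_0\cdot I^{-1}\cdot s_k = (\sigma_0\cdot I^{-1}\cdot\sigma_0)\cdot s_{n-k}\cdot\sigma_0$, and after absorbing the outer $\sigma_0$ into the relabeling this should match exactly the move $I\mapsto I\cdot s_k$ that drives~\eqref{rmatrel}. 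One then checks that the indices $a=n-J_k+1$, $b=n-J_{k+1}+1$ appearing in~\eqref{dualrel} become, under $J\mapsto \sigma_0\cdot J^{-1}=:\bar J$, precisely the indices $\bar J^{-1}(n-k)$ and $\bar J^{-1}(n-k+1)$ — i.e.\ the indices called $a,b$ in~\eqref{rmatrel} for the transposition $s_{n-k}$ acting on $\bar J$. This is the heart of why the two recursions are the same recursion viewed through the mirror.

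Next I would match the $R$-matrix coefficients. The substitution~\eqref{dualR} defines $\tilde R$ from $R$ by exactly the same change of variables $z_i\mapsto\mu_i^{-1}$, $\mu_i\mapsto z_{\sigma_0(i)}$ that enters~\eqref{invtrans}; since Felder's $R$-matrix entries $R^{a,b}_{c,d}(x,\mub)$ depend on the spectral parameter only through $x$ and on the dynamical parameters only through ratios $\mu_j/\mu_k$, applying the substitution to $R\bigl(z_k/z_{k+1},\mub\bigr)$ produces $R\bigl(\mu_{k+1}/\mu_k,\,(z_{\sigma_0(i)})\bigr)$, and one checks this is the coefficient $\tilde R^{\cdot,\cdot}_{\cdot,\cdot}$ demanded by~\eqref{dualrel} once the spectral variable is read off from the $\mu_k\leftrightarrow\mu_{k+1}$ swap. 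So substituting~\eqref{invtrans} into the instance of~\eqref{rmatrel} with $k$ replaced by $n-k$ and simultaneously performing the change of variables turns every term of~\eqref{rmatrel} into the corresponding term of~\eqref{dualrel}; the superscript $z_{n-k}\leftrightarrow z_{n-k+1}$ in~\eqref{rmatrel} becomes the superscript $\mu_k\leftrightarrow\mu_{k+1}$ in~\eqref{dualrel} because $z_{\sigma_0(n-k+1)}=z_k$ and $z_{\sigma_0(n-k)}=z_{k+1}$, so swapping $z_{n-k}$ and $z_{n-k+1}$ before the substitution is the same as swapping $\mu_k$ and $\mu_{k+1}$ after it.

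The main obstacle will be the third term / superscript bookkeeping: making sure that the function $A_{I,J}(\zz,\mub)^{z_k\leftrightarrow z_{k+1}}$ appearing when~\eqref{rmatrel} is rewritten as $A_{I\cdot s_k,J\cdot s_k}=\alpha_{s_k}A_{I,J}^{z_k\leftrightarrow z_{k+1}}+\beta_{s_k}A_{I,J\cdot s_k}$ (as in the proof of Lemma~\ref{reclem}) is transported by~\eqref{invtrans} to precisely $B_{I,J}(\zz,\mub)^{\mu_k\leftrightarrow\mu_{k+1}}$ and not to some twisted variant, and that the second term with $A_{I,J\cdot s_k}$ lands on $B_{I,s_k\cdot J}$ with the correct (dynamical-shifted, if any) argument. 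Concretely I would (i) fix the instance of~\eqref{rmatrel} at index $n-k$, (ii) apply the substitution of~\eqref{invtrans} to both sides, (iii) use the combinatorial identities of the first paragraph to rewrite all three permutation indices, (iv) use the coefficient matching of the previous paragraph, and (v) read off that the resulting identity is exactly~\eqref{dualrel}. Since~\eqref{rmatrel} is assumed to hold for $A$, this chain of rewritings yields~\eqref{dualrel} for $B$, completing the proof. No deep input is needed beyond these identities; the only real risk is an index-reversal or a dynamical-shift sign error, which a careful side-by-side computation on, say, $n=3$ would catch.
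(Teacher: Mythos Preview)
Your plan is exactly the paper's: express $A$ through $B$ via the inverse of~\eqref{invtrans}, plug into~\eqref{rmatrel}, relabel the permutation indices, and change variables to land on~\eqref{dualrel}. No additional idea is needed.

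There is, however, a systematic bookkeeping slip: the index $k$ does \emph{not} shift to $n-k$. In~\eqref{invtrans} it is the $\zz$-arguments of $A$ that are replaced by $\mub^{-1}$, while the $\mub$-arguments of $A$ are replaced by $\zz_{\sigma_0}$. Hence the swap $z_k\leftrightarrow z_{k+1}$ on the left side of~\eqref{rmatrel} becomes the swap $\mu_k^{-1}\leftrightarrow\mu_{k+1}^{-1}$, i.e.\ $\mu_k\leftrightarrow\mu_{k+1}$, with the \emph{same}~$k$; your argument about $z_{\sigma_0(n-k)}$ applies the wrong half of the substitution to the $z$-swap. The same error appears in your index check: with $\bar J:=\sigma_0\cdot J^{-1}$ and the paper's convention $P\cdot Q=Q\circ P$, one has $\bar J^{-1}=\sigma_0\circ J$, so $\bar J^{-1}(k)=n-J_k+1=a$ and $\bar J^{-1}(k+1)=n-J_{k+1}+1=b$; your claimed values $\bar J^{-1}(n-k)$ and $\bar J^{-1}(n-k+1)$ equal $n-J_{n-k}+1$ and $n-J_{n-k+1}+1$, which do not match~\eqref{dualrel}. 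So step~(i) of your concrete plan should read ``fix the instance of~\eqref{rmatrel} at index~$k$''. With that single correction your outline goes through verbatim and coincides with the paper's proof.
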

\begin{proof}Expressing $A_{I,J}(\zz,\mub)$ from (\ref{invtrans}), we find
\begin{gather*}
A_{I,J}(z_1,\dots,z_{n},\mu_{1},\dots,\mu_{n}) = B_{J^{-1}\cdot \sigma_{0},I^{-1}\cdot \sigma_{0}}\big(\mu_{\sigma_{0}(1)},\dots,\mu_{\sigma_{0}(n)},z^{-1}_1,\dots, z_{n}^{-1}\big).
\end{gather*}
Substituting this into (\ref{rmatrel}) we obtain
\begin{gather*}
B_{s_k\cdot J^{-1} \cdot \sigma_{0},s_k\cdot I^{-1} \cdot \sigma_{0}}\big(\mu_{\sigma_{0}(1)},\dots,\mu_{\sigma_{0}(n)},z_{1}^{-1},\dots, z_{n}^{-1}\big)^{z_{k}\leftrightarrow z_{k+1}}\\
\qquad {}= R^{a,b}_{a,b} B_{J^{-1}\cdot \sigma_{0},I^{-1} \cdot \sigma_{0}}\big(\mu_{\sigma_{0}(1)},\dots,\mu_{\sigma_{0}(n)},z_{1}^{-1},\dots, z_{n}^{-1}\big)\\
\qquad \quad {} + R^{b,a}_{a,b} B_{J^{-1}\cdot \sigma_{0},s_k \cdot I^{-1} \cdot \sigma_{0}}\big(\mu_{\sigma_{0}(1)},\dots,\mu_{\sigma_{0}(n)},z_{1}^{-1},\dots, z_{n}^{-1}\big).
\end{gather*}
To see that this identity is equivalent to~(\ref{dualrel}), we change the indices of the matrices by
\begin{gather} \label{DualPoint}
J^{-1}\cdot \sigma_{0}\mapsto I, \qquad I^{-1}\cdot \sigma_{0}\mapsto J,
\end{gather}
such that
\begin{gather*}
 B_{s_k \cdot I ,s_k\cdot J }\big(\mu_{\sigma_{0}(1)},\dots,\mu_{\sigma_{0}(n)},z_{1}^{-1},\dots, z_{n}^{-1}\big)^{z_{k}\leftrightarrow z_{k+1}}\\
 \qquad{} =
 R^{a,b}_{a,b} B_{I,J}\big(\mu_{\sigma_{0}(1)},\dots,\mu_{\sigma_{0}(n)},z_{1}^{-1},\dots, z_{n}^{-1}\big)\\
 \qquad\quad{}+R^{b,a}_{a,b} B_{I,s_k \cdot J}\big(\mu_{\sigma_{0}(1)},\dots,\mu_{\sigma_{0}(n)},z_{1}^{-1},\dots, z_{n}^{-1}\big).
\end{gather*}
Substitution $z_{i} \mapsto \mu_{i}^{-1}$, $\mu_{i} \mapsto z_{\sigma_{0}(i)}$ simplifies it to
\begin{gather} \label{beque}
B_{s_k \cdot I ,s_k\cdot J }(\zz,\mub)^{\mu_{k}\leftrightarrow \mu_{k+1}}=\tilde{R}^{a,b}_{a,b} B_{I,J}(\zz,\mub)+\tilde{R}^{b,a}_{a,b} B_{I,s_k \cdot J}(\zz,\mub),
\end{gather}
where $\tilde{R}^{b,a}_{a,b}$ are related to Felder's $R$-matrix as in (\ref{dualR}). Finally, in $R$-matrix relations (\ref{rmatrel}) the index $a$ is the number of the element $k$ in the permutation $I$ and $b$ is the number of the element $k+1$ in $I$. After changing indexes as in~(\ref{DualPoint}) we find that $a=n-J_k+1$ and $b=n-J_{k+1}+1$. We see that relation~(\ref{beque}) coincides with~(\ref{dualrel}).
\end{proof}

We conclude the following result.
\begin{Theorem} \label{duthrm} The elements of the restriction matrix satisfy the following identities
\begin{gather} \label{mainsym}
A_{I,J}(\zz,\mub)=(-1)^{n(n-1)/2} A_{J^{-1}\cdot \sigma_{0},I^{-1}\cdot \sigma_{0}}\big(\mu_{\sigma_{0}(1)},\dots,\mu_{\sigma_{0}(n)},z^{-1}_1,\dots, z_{n}^{-1}\big),
\end{gather}
where $\sigma_{0}$ denotes the longest permutation in the symmetric group $\mathfrak{S}_{n}$.
\end{Theorem}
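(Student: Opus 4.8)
The plan is to read Theorem~\ref{duthrm} as the single matrix identity $B=(-1)^{n(n-1)/2}A$, where $B_{I,J}$ is the reindexed matrix defined in~\eqref{invtrans}, and to establish it by matching $B$ against the dual recursive characterisation of the restriction matrix. First, by Corollary~\ref{cor1} the matrix $A_{I,J}(\zz,\mub)$ satisfies the $R$-matrix relations~\eqref{rmatrel}, so Proposition~\ref{prop1} applies and $B_{I,J}(\zz,\mub)$ satisfies the dual relations~\eqref{dualrel}. On the other hand, by Lemma~\ref{dulem} the matrix $A$ is the \emph{unique} lower-triangular (in the $\succ$-ordered basis) matrix with diagonal~\eqref{diagonal} satisfying~\eqref{dualrel}; since~\eqref{dualrel} is homogeneous linear in the matrix, $(-1)^{n(n-1)/2}A$ is likewise the unique lower-triangular matrix satisfying~\eqref{dualrel} whose diagonal is $(-1)^{n(n-1)/2}$ times~\eqref{diagonal}. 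Hence it suffices to check that $B$ is lower triangular in $\succ$ and that its diagonal equals $(-1)^{n(n-1)/2}$ times~\eqref{diagonal}.

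For triangularity I would argue from that of $A$ (Lemma~\ref{hollem}): $B_{I,J}$ is a change of variables applied to $A_{\sigma_{0}\cdot J^{-1},\,\sigma_{0}\cdot I^{-1}}$, and $A_{K,L}$ vanishes whenever $L\succ K$, so it is enough to know that the reindexing $X\mapsto\sigma_{0}\cdot X^{-1}$ reverses $\succ$, i.e.\ $J\succ I$ implies $\sigma_{0}\cdot I^{-1}\succ\sigma_{0}\cdot J^{-1}$; this one verifies directly from the definition of $\succ$ (inversion preserves it and left multiplication by the longest permutation reverses it). For the diagonal I would use~\eqref{diagonal} to write
\[
B_{I,I}(\zz,\mub)=A_{\sigma_{0}\cdot I^{-1},\,\sigma_{0}\cdot I^{-1}}\big(\mu_{1}^{-1},\dots,\mu_{n}^{-1},\,z_{\sigma_{0}(1)},\dots,z_{\sigma_{0}(n)}\big),
\]
and then evaluate the two $P$-factors: the second becomes $P_{(\sigma_{0}I^{-1})^{-1}\sigma_{0}}(z_{1},\dots,z_{n})=P_{I}(z_{1},\dots,z_{n})$ after unwinding $\sigma_{0}^{2}=\id$, while the first, $P_{\sigma_{0}I^{-1}}(\mu_{1}^{-1},\dots,\mu_{n}^{-1})$, equals $P_{I^{-1}\cdot\sigma_{0}}(\mu_{\sigma_{0}(1)},\dots,\mu_{\sigma_{0}(n)})$ by Lemma~\ref{pprop}; the sign $(-1)^{\sigma_{0}\cdot I^{-1}}$ equals $(-1)^{n(n-1)/2}(-1)^{I}$ because $I$ and $I^{-1}$ have the same parity and $(-1)^{\sigma_{0}}=(-1)^{n(n-1)/2}$. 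Comparing with~\eqref{diagonal} gives $B_{I,I}=(-1)^{n(n-1)/2}A_{I,I}$.

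Granting these two facts, the uniqueness in Lemma~\ref{dulem} forces $B=(-1)^{n(n-1)/2}A$ as matrices of holomorphic functions (holomorphicity being Lemma~\ref{hollem}, which also legitimises the substitutions of inverted parameters). Finally, solving~\eqref{invtrans} for $A$ — exactly as recorded in the proof of Proposition~\ref{prop1} — gives $A_{I,J}(z_{1},\dots,z_{n},\mu_{1},\dots,\mu_{n})=B_{J^{-1}\cdot\sigma_{0},\,I^{-1}\cdot\sigma_{0}}(\mu_{\sigma_{0}(1)},\dots,\mu_{\sigma_{0}(n)},z_{1}^{-1},\dots,z_{n}^{-1})$, and substituting $B=(-1)^{n(n-1)/2}A$ on the right-hand side yields precisely~\eqref{mainsym}. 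I expect the only genuinely delicate step to be the diagonal bookkeeping in the second paragraph — keeping the two permuted argument tuples straight and feeding the correct conjugate of $I$ into Lemma~\ref{pprop}; the order-reversal property of $X\mapsto\sigma_{0}\cdot X^{-1}$ and the reduction to Lemma~\ref{dulem} are routine.
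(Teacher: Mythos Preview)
Your proof is correct and follows essentially the same route as the paper's: define $B$ via~\eqref{invtrans}, use Proposition~\ref{prop1} to see that $B$ satisfies the dual relations~\eqref{dualrel}, match the diagonal of $B$ with $(-1)^{n(n-1)/2}A_{I,I}$ via Lemma~\ref{pprop} and the parity computation, and invoke the uniqueness in Lemma~\ref{dulem}. Your explicit verification that $B$ is lower triangular (via the order-reversing property of $X\mapsto\sigma_{0}\cdot X^{-1}$) in fact fills in a point the paper's proof leaves implicit.
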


\begin{proof}Let $B_{I,J}(\zz,\mu)$ be as in the previous proposition. First \begin{gather*} A_{I,I}(\zz,\mu)=(-1)^{n(n-1)/2} B_{I,I}(\zz,\mu).\end{gather*} This follows from~(\ref{diagonal}) Lemma~\ref{pprop} and $(-1)^{\sigma_{0}}=(-1)^{n(n-2)/2}$.

By Corollary \ref{cor1} $A_{I,J}(\zz,\mu)$ satisfies the $R$-matrix relations, and thus by the previous proposition $B_{I,J}(\zz,\mu)$ satisfies relations~(\ref{dualrel}).
By Lemmas~\ref{reclem} and~\ref{dulem}, we conclude
\begin{gather*}
A_{I,J}(\zz,\mu)=(-1)^{n(n-1)/2} B_{I,J}(\zz,\mu).
\end{gather*}
This identity is equivalent to (\ref{mainsym}) after the change of variables $z_{i}\mapsto \mu_{\sigma_{0}(i)}$, $\mu_{i}\mapsto z_i^{-1}$ and indexes $\sigma_{0}\cdot J^{-1}\mapsto I$, $\sigma_{0}\cdot I^{-1}\mapsto J$.
\end{proof}

\begin{Note}
We would like to stress here that the identity (\ref{mainsym}) describes a symmetry between two sets of parameters of completely different nature: the
equivariant parameters $\zz$ and the K\"ahler parameters $\mub$. The symmetry of the elliptic stable envelopes with respect to the transformation $\zz\leftrightarrow \mub$ is one of the predictions of 3d mirror symmetry. We will discuss this point of view in Section~\ref{envelsection}.
\end{Note}

\subsection{Examples} \label{exsection}

\textbf{Case $n=2$.} Using (\ref{WI2}) we find that the weight functions are equal
\begin{gather*}
W_{(1,2)}=\thi \left( {\frac {\hbar z_{{1}}\mu_{{2}}}{t_{{1}}^{(1)} \mu_{{1}}}} \right)
\thi \left( {\frac {z_{{2}}}{t_1^{(1)} }} \right), \qquad W_{(2,1)}=\thi \left( {\frac {\hbar z_{{1}}}{t_1^{(1)} }} \right) \thi \left( {
 \frac {z_{{2}}\mu_{{2}}}{t_1^{(1)} \mu_{{1}}}} \right).
\end{gather*}
Here, as we defined in Section~\ref{notesec}, $(1,2)$ and $(2,1)$ denote the fixed points corresponding to the trivial and non-trivial permutations of $\mathfrak{S}_{2}$ respectively.

By~(\ref{fpres}) the restriction to the point $(1,2)$ is given by the substitution $t_1^{(1)} =z_{{1}}$ and that to the point $(2,1)$ is given by the substitution $t_1^{(1)} =z_{{2}}$. Thus, in the basis of permutations ordered by $(1,2)$, $(2,1)$, the matrix of restrictions equals
\begin{gather*}
A_{I,J}(z_1,z_2,\mu_1,\mu_2)=
\left( \begin{matrix} \thi \left( {\dfrac {\hbar \mu_{{2}}}{\mu_{{1}
}}} \right) \thi \left( {\dfrac {z_{{2}}}{z_{{1}}}} \right) & 0
\vspace{2mm}\\ \thi( \hbar ) \thi \left( {\dfrac {z_
 {{2}}\mu_{{2}}}{z_{{1}}\mu_{{1}}}} \right) & \thi \left( {\dfrac {\hbar z_
 {{1}}}{z_{{2}}}} \right) \thi \left( {\dfrac {\mu_{{2}}}{\mu_{{1}}}} \right) \end{matrix} \right).
\end{gather*}
The statement of Theorem \ref{duthrm} in this case is equivalent to the following system of identities
\begin{gather*}
A_{(1, 2), (1, 2)}(z_1,z_2,\mu_1,\mu_2) =- A_{(2, 1), (2, 1)}(\mu_2,\mu_1,1/z_1,1/z_2),\\
A_{(1, 2), (2, 1)}(z_1,z_2,\mu_1,\mu_2) =- A_{(1, 2), (2, 1)}(\mu_2,\mu_1,1/z_1,1/z_2),\\
A_{(2,1), (1, 2)}(z_1,z_2,\mu_1,\mu_2) =- A_{(2, 1), (1, 2)}(\mu_2,\mu_1,1/z_1,1/z_2),\\
A_{(2, 1), (2, 1)}(z_1,z_2,\mu_1,\mu_2) =- A_{(1, 2), (1, 2)}(\mu_2,\mu_1,1/z_1,1/z_2).
\end{gather*}
It is easy to observe that all these identities trivially follow from $\thi (1/x)=-\thi (x)$. The situation, however, is more involved in the ``non-abelian'' cases $n\geq 3$.

\textbf{Case $\boldsymbol{n=3}$.} In this case one checks that the identities (\ref{mainsym}) are all trivial (i.e., both sides are equal to zero or coincide trivially) except the following matrix elements
\begin{gather*}
A_{(3, 1, 2), (1, 2, 3)}(z_1,z_2,z_3,\mu_1,\mu_2,\mu_3) = - A_{(3, 2, 1), (2, 1, 3)}(\mu_3,\mu_2,\mu_1,1/z_1,1/z_2,1/z_3),\\
A_{(3, 2, 1), (2, 1, 3)}(z_1,z_2,z_3,\mu_1,\mu_2,\mu_3) = -A_{(2, 3, 1), (1, 2, 3)}(\mu_3,\mu_2,\mu_1,1/z_1,1/z_2,1/z_3),\\
A_{(3, 2, 1), (1, 2, 3)}(z_1,z_2,z_3,\mu_1,\mu_2,\mu_3) =- A_{(3, 2, 1), (1, 2, 3)}(\mu_3,\mu_2,\mu_1,1/z_1,1/z_2,1/z_3),
\end{gather*}
Let us, for instance, compute the two sides of the last line. Using the definition (\ref{WI2}) we have
\begin{gather*}
W_{(3, 2, 1)}(\ttt,\zz,\hbar,\mub)\\
\qquad{}= \dfrac{\thi\! \left( \dfrac {\hbar t_1^{(2)}}{t_1^{(1)}} \right) \thi\! \left( {
 \dfrac {t_2^{(2)} \mu_{{2}}}{t^{(1)}_1 \mu_{{1}}}} \right) \thi\! \left(
 {\dfrac {\hbar z_{{1}}}{t^{(2)}_1 }} \right) \thi\! \left( {\dfrac {z_{{2}} \mu
 _{{3}}}{t^{(2)}_1 \mu_{{2}}}} \right) \thi\! \left( {\dfrac {z_{{3}}}{t^{(2)}_1 }} \right) \thi\! \left( {\dfrac {\hbar z_{{1}}}{t^{(2)}_2 }} \right)
 \thi\! \left( {\dfrac {\hbar z_{{2}}}{t^{(2)}_2 }} \right) \thi\! \left( {
 \dfrac {z_{{3}}\mu_{{3}}}{t^{(2)}_2 \mu_{{1}}}} \right)
}{\thi\! \left( {\dfrac {\hbar t^{(2)}_1 }{t^{(2)}_2 }} \right) \thi\! \left( { \dfrac {t^{(2)}_2 }{t^{(2)}_1 }} \right)} \\
\qquad\quad{} + \big( t^{(2)}_1 \leftrightarrow t^{(2)}_2 \big).
\end{gather*}
where the second term $\big( t^{(2)}_1 \leftrightarrow t^{(2)}_2 \big)$ denotes the first term with $t^{(2)}_1$, $t^{(2)}_2$ switched.

By (\ref{fpres}), the restriction of a weight function to $(3,2,1)$ corresponds to the specialization $t^{(1)}_1 =z_{{1}}$, $t^{(2)}_1 =z_{{1}}$, $t^{(2)}_2=z_{{2}}$. Thus, we compute
\begin{gather*}
A_{(3, 2, 1), (1, 2, 3)}(z_1,z_2,z_3,\mu_1,\mu_2,\mu_3) \\
\qquad{}= -\dfrac{\thi ( \hbar ) ^{3}\thi \left( {\dfrac {z
 _{{1}}\mu_{{1}}}{z_{{2}}\mu_{{2}}}} \right) \thi \left( {\dfrac {z_{{
 1}}\mu_{{2}}}{z_{{2}}\mu_{{3}}}} \right) \thi \left( {\dfrac {z_{{1}}
 }{z_{{3}}}} \right) \thi \left( {\dfrac {z_{{2}}\mu_{{1}}}{z_{{3}}\mu
 _{{3}}}} \right)}{\thi \left( {\dfrac {z_{{1}}}{z_{{2}}}} \right)} \\
 \qquad\quad{} +
 \dfrac{\thi( \hbar ) \thi \left( {
 \dfrac {\hbar z_{{1}}}{z_{{2}}}} \right) \thi \left( {\dfrac {\mu_{{1}}}{
 \mu_{{2}}}} \right) \thi \left( {\dfrac {z_{{2}}}{z_{{3}}}} \right)
 \thi \left( {\dfrac {\mu_{{2}}}{\mu_{{3}}}} \right) \thi \left( {
 \dfrac {z_{{1}}\mu_{{1}}}{z_{{3}}\mu_{{3}}}} \right) \thi \left( {
 \dfrac {\hbar z_{{2}}}{z_{{1}}}} \right)}{\thi \left( {\dfrac {z_{{1
 }}}{z_{{2}}}} \right)},
\end{gather*}
and the identity above takes the form
\begin{gather*}
-\dfrac{\thi ( \hbar ) ^{3} \thi \left( {\dfrac {z
 _{{1}}\mu_{{1}}}{z_{{2}}\mu_{{2}}}} \right) \thi \left( {\dfrac {z_{{
 1}}\mu_{{2}}}{z_{{2}}\mu_{{3}}}} \right) \thi \left( {\dfrac {z_{{1}}
 }{z_{{3}}}} \right) \thi \left( {\dfrac {z_{{2}}\mu_{{1}}}{z_{{3}}\mu
 _{{3}}}} \right) }{\thi \left( {\dfrac {z_{{1}}}{z_{{2}}}} \right)} \\
 \qquad\quad{} +
\dfrac{\thi( \hbar ) \thi \left( {
 \dfrac {\hbar z_{{1}}}{z_{{2}}}} \right) \thi \left( {\dfrac {\mu_{{1}}}{
 \mu_{{2}}}} \right) \thi \left( {\dfrac {z_{{2}}}{z_{{3}}}} \right)
 \thi \left( {\dfrac {\mu_{{2}}}{\mu_{{3}}}} \right) \thi \left( {
 \dfrac {z_{{1}}\mu_{{1}}}{z_{{3}}\mu_{{3}}}} \right) \thi \left( {
 \dfrac {\hbar z_{{2}}}{z_{{1}}}} \right) }{\thi \left( {\dfrac {z_{{1
 }}}{z_{{2}}}} \right)} \\
\qquad{}=
 - \dfrac{\thi(\hbar)^{3} \thi \left( {\dfrac {z
 _{{2}}\mu_{{3}}}{z_{{1}}\mu_{{2}}}} \right) \thi \left( {\dfrac {z_{{
 3}}\mu_{{3}}}{z_{{2}}\mu_{{2}}}} \right) \thi \left( {\dfrac {\mu_{{3
 }}}{\mu_{{1}}}} \right) \thi \left( {\dfrac {z_{{3}}\mu_{{2}}}{z_{{1}
 }\mu_{{1}}}} \right) }{ \thi \left( {\dfrac {\mu_{{3}}}{\mu_{{2}
 }}} \right) }\\
 \qquad\quad{} +\dfrac{\thi(\hbar) \thi \left( {
 \dfrac {\hbar\mu_{{3}}}{\mu_{{2}}}} \right) \thi \left( {\dfrac {z_{{2}}}{
 z_{{1}}}} \right) \thi \left( {\dfrac {\mu_{{2}}}{\mu_{{1}}}}
 \right) \thi \left( {\dfrac {z_{{3}}}{z_{{2}}}} \right) \thi
 \left( {\dfrac {z_{{3}}\mu_{{3}}}{z_{{1}}\mu_{{1}}}} \right) \thi
 \left( {\dfrac {\hbar \mu_{{2}}}{\mu_{{3}}}} \right) }{ \thi \left(
 {\dfrac {\mu_{{3}}}{\mu_{{2}}}} \right) } .
\end{gather*}
This is an example of nontrivial identity satisfied by the Jacobi theta-functions. It is equivalent to the so called four-term identity for the theta functions, see equation~(2.7) in~\cite{RTV}, after some identification of the parameters.

\section{Elliptic stable envelopes \label{envelsection}}
\subsection{Elliptic stable envelopes in holomorphic normalization}
The elliptic stable envelopes for Nakajima quiver varieties were defined in~\cite{AOelliptic}. If $X$ is the Nakajima quiver variety defined in Section~\ref{xdef} (the cotangent bundle over the full flag variety) and $I\in X^{\bT}$ is a fixed point then the elliptic stable envelope ${\bf Stab}_{\sigma}(I)$ is the unique section of a certain line bundle over $\textsf{E}_{\bT}(X)$ distinguished by a set of remarkable properties. We refer to \cite[Section~3]{AOelliptic} for the original definition. The elliptic stable envelope depends on a choice of a chamber~$\sigma$. For~$X$ the set of chambers coincides with the set of Weyl chambers of the Lie algebra~$\mathfrak{sl}_{n}$ and thus, the chambers are parameterized by permutations~$\sigma$, see~\cite{Rim} for the detailed discussion of cotangent bundles over partial flag varieties.

Let us set $\mathsf{S}(X)=\prod\limits_{k=1}^{n-1} S^k E$ where $S^k E$ denotes the $k$-th symmetric power of the elliptic curve~$E$. Coordinates on $\mathsf{S}(X)$ are symmetric functions in Chern roots $\ttt$ of the tautological bundles. Recall the following map as in~(\ref{diag-Kirwan})
\begin{gather*}
\textsf{E}_{\bT}(X) \stackrel{c_{X}}{\longrightarrow} \mathsf{S}(X)\times \cE_{\bT} \times \cE_{{\rm Pic}(X)},
\end{gather*}
given by the elliptic Chern classes of the tautological bundles over $X$. It is known that $c_{X}$ is an embedding \cite{McGN}, see also \cite[Section~2.4]{AOelliptic}.

The elliptic weight functions $W_{\sigma, I}(\ttt,\zz,\hbar,\mu)$ are symmetric in $\ttt$ and thus represent sections of certain line bundles over the scheme $\mathsf{S}(X)\times \cE_{\bT} \times \cE_{{\rm Pic}(X)}$. The following theorem describes the known relation between the weight functions and the elliptic stable envelopes for $X$.
\begin{Theorem}The elliptic stable envelope of a fixed point $I\in X^{\bT}$ for a chamber $\sigma$ is given by the restriction of the corresponding elliptic weight function to elliptic cohomology of $X$:
\begin{gather} \label{envweit}
\textbf{Stab}_{\sigma}(I)=c_{X}^{*}W_{\sigma, I}(\ttt,\zz,\hbar,\mub).
\end{gather}
\end{Theorem}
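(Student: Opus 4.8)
The plan is to verify that the section $c_X^* W_{\sigma,I}(\ttt,\zz,\hbar,\mub)$ satisfies the three properties that, by \cite{AOelliptic}, characterize the elliptic stable envelope $\textbf{Stab}_\sigma(I)$ uniquely (the construction and its uniqueness apply since $X$ is a Nakajima quiver variety): (i) it is a holomorphic section of the prescribed line bundle $\mathcal{T}(I)$ over $\textsf{E}_\bT(X)$; (ii) the support (triangularity) condition with respect to the partial order on fixed points attached to the chamber $\sigma$; and (iii) the diagonal normalization of its restriction to the orbit $\widehat\Or_I$, in the holomorphic normalization convention. Once all three are checked, uniqueness of the section satisfying these axioms forces the equality (\ref{envweit}).

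First I would establish that $W_{\sigma,I}$ descends to a holomorphic section over $\mathsf{S}(X)\times\cE_\bT\times\cE_{\Pic(X)}$ and then identify the line bundle obtained after pull-back along the Chern class embedding $c_X$ of (\ref{diag-Kirwan}). By construction (\ref{WI2}), $W_{\sigma,I}$ is symmetric in each block of Chern roots $\ttt^{(k)}$, so it is a function on $\mathsf{S}(X)=\prod_k S^k E$; the only potential poles are the factors $\thi(t_a^{(k)}/t_b^{(k)})$ in the denominator of $U_I$, and these cancel against zeros produced by the symmetrizations $\Sym_{\ttt^{(k)}}$, leaving a holomorphic section. To pin down $\mathcal{T}(I)$, I would compute the multipliers of $W_{\sigma,I}$ under the lattice shifts $t\mapsto qt$, $z\mapsto qz$, $\hbar\mapsto q\hbar$ and $\mu\mapsto q\mu$ from $\thi(qx)/\thi(x)=-q^{-1/2}x^{-1}$ and $\thi(1/x)=-\thi(x)$, and match these quasi-periods with the line bundle of \cite[Section~3]{AOelliptic}, which is assembled from the attracting half $N_I^+$ of the polarization of $T_I X = N_I^+ \oplus N_I^-$ (Section~\ref{xdef}) together with the K\"ahler twist by $\cE_{\Pic(X)}$ recorded in the $\mu$-dependent case of $\psi_{I,k,a,c}$. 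This amounts to the abelianization computation of \cite{AOelliptic} carried out in the quiver model of Section~\ref{xdef}.

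Second, the support condition: by (\ref{resmat}) the restriction of $c_X^* W_{\sigma,I}$ to the orbit $\widehat\Or_J$ is the matrix element $A^\sigma_{I,J}(\zz,\mub)$. For the identity chamber this is $A_{I,J}(\zz,\mub)$, which by Lemma~\ref{hollem} vanishes whenever $J\succ I$ — precisely the required vanishing on the non-attracted orbits — and a general chamber $\sigma$ reduces to this case through $W_{\sigma,I}=W_{\sigma^{-1}(I)}(\ttt,\zz_\sigma,\hbar,\mub)$, since relabelling by $\sigma$ intertwines the dominance order with the order of the chamber $\sigma$ and carries the linear restriction equations (\ref{restr}) to themselves. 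Third, the diagonal normalization $A_{I,I}(\zz,\mub)$ is given explicitly in (\ref{diagonal}) as $(-1)^I P_I(\zz)\,P_{I^{-1}\cdot\sigma_0}(\mu_{\sigma_0(1)},\dots,\mu_{\sigma_0(n)})$; comparing with the decomposition $T_I X = N_I^+ \oplus N_I^-$, the factor $P_I(\zz)$ is the product of $\thi$ over the repelling directions $N_I^-$, while the sign $(-1)^I$ and the $\mu$-factor are exactly the contribution prescribed by the holomorphic normalization of \cite{AOelliptic}, so (iii) holds.

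The main obstacle is step (i): the line bundle $\mathcal{T}(I)$ is defined intrinsically in terms of elliptic Thom classes and the universal bundle on $\textsf{E}_\bT(X)$, whereas $W_{\sigma,I}$ is an explicit alternating product of Jacobi theta functions, so reconciling the two requires carefully tracking every $q$-shift multiplier — including the K\"ahler shifts — through the symmetrizations. I expect the cleanest route is to restrict to each orbit $\widehat\Or_J\cong\cE_\bT\times\cE_{\Pic(X)}$ using the GKM presentation of Proposition~\ref{progkm}: there both $c_X^* W_{\sigma,I}$ and $\textbf{Stab}_\sigma(I)$ become explicit theta functions whose quasi-periods can be read off directly and matched, and the gluing data $\Delta$ guarantees that agreement on all orbits gives agreement globally. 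Alternatively, and more briefly, the identity (\ref{envweit}) for the cotangent bundle of the full flag variety is already contained in \cite{RTV} (see also \cite[Section~2.4]{AOelliptic}), so one may simply cite that source.
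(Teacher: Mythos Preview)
Your proposal is correct and follows the same strategy as the paper: verify that $c_X^*W_{\sigma,I}$ satisfies the defining axioms of \cite{AOelliptic} and invoke uniqueness. The paper simply compresses all three steps into a single citation of \cite[Theorem~7.3]{RTV}, which is exactly the alternative you mention in your last sentence; your detailed sketch of (i)--(iii) is what that reference carries out.
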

\begin{proof}In the original paper \cite{AOelliptic} the elliptic stable envelope $\textbf{Stab}_{\sigma}(I)$ was defined as the unique section of certain line bundle satisfying a list of defining conditions. It was checked in Theorem~7.3 of~\cite{RTV} that the right side of~(\ref{envweit}) satisfies these conditions.
\end{proof}

\begin{Remark} The elliptic stable envelopes $\Stab^{AO}_{\sigma}(I)$ defined by Aganagic--Okounkov in \cite{AOelliptic} and the restrictions~(\ref{envweit}) differ by a normalization (i.e., by a factor). One of the defining properties in \cite{AOelliptic} fixes the diagonal restriction
\begin{gather*}
 \Stab^{AO}_{\sigma}(I)\big|_{\widehat{\Or}_{I}}=P_{\sigma^{-1} \cdot I}(\zz_{\sigma}),
\end{gather*}
while in our normalization of the elliptic weight functions the diagonal restrictions are given by~(\ref{diagonal}). This means that the Aganagic--Okounkov stable envelopes and the ones we use in the present paper are related by
\begin{gather*}
\textbf{Stab}_{\sigma}(I)=(-1)^{\sigma^{-1} I} P_{I^{-1} \sigma \sigma_{0}}(\mu_{\sigma_0(1)},\dots,\mu_{\sigma_0(n)}) \Stab^{AO}_{\sigma}(I).
\end{gather*}
That is, the two versions of stable envelopes are sections of line bundles related by the twist of a~line bundle which $P_{I^{-1} \sigma \sigma_{0}}(\mu_{\sigma_0(1)}, \dots,\mu_{\sigma_0(n)})$ is a section of. We chose to use~(\ref{envweit}) is this paper because in this normalization the stable envelopes are \textit{holomorphic}, see Lemma~\ref{hollem}.
\end{Remark}

\subsection[Dual variety $X'$ and dual stable envelope]{Dual variety $\boldsymbol{X'}$ and dual stable envelope}
 Let us fix a second copy of symplectic variety isomorphic to the cotangent bundle over the full flag variety. To distinguish it from $X$ we denote it by $X'$. We will refer to $X'$ as ``dual variety''. We denote the torus acting on $X'$ by $\bT'$ (by definition, it acts on $X'$ in the same way the torus~$\bT$ acts on~$X$). As in~(\ref{excx}) the extended equivariant elliptic cohomology scheme of this variety has the following form
\begin{gather} \label{duext}
\textsf{E}_{\bT'}(X')=\bigg(\coprod\limits_{I\in (X')^{\bT'}} \widehat{\Or}'_{I}\bigg)/\Delta,
\end{gather}
 where $\widehat{\Or}'_{I}\cong \cE_{\bT'} \times \cE_{{\rm Pic}(X')}$. We will denote by $(\zz',\hbar',\mub')$ the coordinates on $\widehat{\Or}'_{I}$.

We denote by ${\bf Stab}'$ the elliptic stable envelope for the dual variety corresponding to the chamber $\sigma_{0}$:
 \begin{gather} \label{duenv}
 {\bf Stab}' (I)=(-1)^{n(n-1)/2} c_{X'}^{*} W_{\sigma_{0},I}(\ttt',\zz',\hbar',1/\mub'),
 \end{gather}
where $\ttt'$ stands for the set of Chern roots of the tautological bundles over $X'$ and $c_{X'}$ is the same as in the previous subsection.

 \subsection{Identification of K\"ahler and equivariant parameters}

 Although as varieties $X$ and $X'$ are isomorphic, we treat them differently. In particular, fixed points and parameters will be identified in a nontrivial way.

 We fix an isomorphism of extended orbits of dual varieties
 \begin{gather*}
 \kappa \colon \ \widehat{\Or}_{I} \rightarrow \widehat{\Or}'_{J}
 \end{gather*}
 defined explicitly in coordinates by
 \begin{gather*}
 \mu_{i}' \mapsto z_{i}, \qquad z_{i}' \mapsto \mu_{i}, \qquad \hbar' \mapsto \hbar , \qquad i=1,\dots, n.
 \end{gather*}
 Note that $\kappa$ {\it maps the equivariant parameters of $X$ to the K\"ahler parameters of $X'$ and vice versa}. In particular, it provides an isomorphisms (which we denote by the same symbol, for simplicity)
 \begin{gather} \label{ident}
 \kappa\colon \ \cE_{{\rm Pic}(X)}\cong \cE_{\bT'}, \qquad \cE_{{\rm Pic}(X')}\cong \cE_{\bT}.
 \end{gather}

\subsection{3d mirror symmetry of cotangent bundles over full flag varieties}
 It is clear that $X^{\bT}$ and $(X')^{\bT'}$ are the same sets. We define a bijection
 \begin{gather*}
 \textsf{bj}\colon \ X^{\bT} \rightarrow (X')^{\bT'}, \qquad \textsf{bj}(I) := I^{-1}.
 \end{gather*}
 We say that $J \in (X')^{\bT'}$ is the fixed point corresponding to a fixed point $I\in X^{\bT}$ if $J=\textsf{bj}(I)$.

Now we are ready to formulate our main theorem revealing $z\leftrightarrow \mu$ symmetry of elliptic stable envelopes associated with the cotangent bundles over full flag varieties:
 \begin{Theorem} Let $I,J \in X^{\bT}$ be fixed points and $I^{-1}$, $J^{-1}$ be the corresponding fixed points on the dual variety. Then
\begin{gather*} 
 {\bf Stab}(I)\big|_{\widehat{\Or}_{J}}=\kappa^{*}\big( {\bf Stab}'\big(J^{-1}\big)\big|_{\widehat{\Or}^{'}_{I^{-1}}}\big).
\end{gather*}
 \end{Theorem}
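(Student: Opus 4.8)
The plan is to identify both sides of the asserted equality with restrictions of elliptic weight functions and then to recognize the resulting relation as a geometric repackaging of Theorem~\ref{duthrm}, which already contains the analytically nontrivial content. First I would rewrite the left-hand side: by \eqref{envweit} in the chamber $\sigma=\id$ one has ${\bf Stab}(I)=c_{X}^{*}W_{I}(\ttt,\zz,\hbar,\mub)$, and since $c_{X}$ is an embedding and $W_{I}$ is symmetric in each group of Chern roots, restricting this class to the orbit $\widehat{\Or}_{J}$ is the fixed-point substitution $\ttt=\zz_{J}$ of \eqref{restr}--\eqref{fpres}, so
\[
{\bf Stab}(I)\big|_{\widehat{\Or}_{J}}=W_{I}(\zz_{J},\zz,\hbar,\mub)=A_{I,J}(\zz,\mub)
\]
by \eqref{resmat}. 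The same computation for $X'$, now using \eqref{duenv} (whose chamber is $\sigma_{0}$) and the substitution $\ttt'=\zz'_{I^{-1}}$, gives
\[
{\bf Stab}'\big(J^{-1}\big)\big|_{\widehat{\Or}'_{I^{-1}}}=(-1)^{n(n-1)/2}\,A^{\sigma_{0}}_{J^{-1},I^{-1}}\big(\zz',1/\mub'\big),
\]
where $A^{\sigma_{0}}$ denotes the restriction matrix of $X'$ in the chamber $\sigma_{0}$, the same function as for $X$. Applying the change of variables $\kappa$ of \eqref{kap}, under which $\zz'\mapsto\mub$, $\mub'\mapsto\zz$ and $\hbar'\mapsto\hbar$, the right-hand side of the theorem becomes $(-1)^{n(n-1)/2}\,A^{\sigma_{0}}_{J^{-1},I^{-1}}(\mub,1/\zz)$.

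It then remains to prove the combinatorial identity $A_{I,J}(\zz,\mub)=(-1)^{n(n-1)/2}\,A^{\sigma_{0}}_{J^{-1},I^{-1}}(\mub,1/\zz)$. Here I would unfold the $\sigma_{0}$-chamber restriction matrix via $W_{\sigma_{0},K}(\ttt,\zz,\hbar,\mub)=W_{\sigma_{0}^{-1}(K)}(\ttt,\zz_{\sigma_{0}},\hbar,\mub)$ together with the fact that the Chern-root substitution is insensitive to the ordering of the roots (again by symmetry of $W$); tracking how the fixed-point substitution and the ordered index sets \eqref{ordset} behave under $\zz\mapsto\zz_{\sigma_{0}}$, this writes $A^{\sigma_{0}}_{J^{-1},I^{-1}}(\mub,1/\zz)$ as an entry of the \emph{untwisted} restriction matrix at the parameters $(\mu_{\sigma_{0}(1)},\dots,\mu_{\sigma_{0}(n)},z_{1}^{-1},\dots,z_{n}^{-1})$ and at the indices obtained from $J^{-1},I^{-1}$ by the relabelling \eqref{DualPoint} already used in the proof of Proposition~\ref{prop1}. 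With this identification the desired equality is precisely \eqref{mainsym} of Theorem~\ref{duthrm}, which that theorem derives from the two recursions of Lemmas~\ref{reclem} and \ref{dulem}; reading the equalities backwards then proves the theorem. Let me also note that the sign $(-1)^{n(n-1)/2}$ and the chamber $\sigma_{0}$ built into the normalization \eqref{duenv} of ${\bf Stab}'$ are chosen exactly so that this sign cancels the one in \eqref{mainsym} and so that the transposition of matrix indices in \eqref{mainsym} is compatible with the bijection $\textsf{bj}(I)=I^{-1}$ together with the $\kappa$-interchange of K\"ahler and equivariant parameters.

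The one genuinely delicate step is the bookkeeping in the preceding paragraph: keeping the chamber twist $\sigma_{0}$, the permutation inverse hidden in $\textsf{bj}$, the $\sigma_{0}$-permutation and inversion of the equivariant/K\"ahler parameters, and the transposition $A_{I,J}\leftrightarrow A_{J,I}$ all mutually consistent, so that the translated statement lands exactly on \eqref{mainsym}. Once that dictionary is set up the argument is purely formal, because the nontrivial input --- the theta-function identity --- is already isolated in Theorem~\ref{duthrm}; in this sense the present theorem is merely its reformulation as a $\zz\leftrightarrow\mub$ symmetry of the elliptic stable envelopes of the mirror pair $(X,X')$.
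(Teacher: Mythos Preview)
Your proposal is correct and follows essentially the same approach as the paper: both identify the left-hand side with $A_{I,J}(\zz,\mub)$, unwind the definition \eqref{duenv} on the right-hand side, apply $\kappa$, and then recognize the resulting identity as exactly \eqref{mainsym} of Theorem~\ref{duthrm}. The only difference is cosmetic: the paper writes the $X'$-restriction directly as $(-1)^{n(n-1)/2}A_{J^{-1}\cdot\sigma_{0},\,I^{-1}\cdot\sigma_{0}}(\zz'_{\sigma_{0}},1/\mub')$ rather than first naming it $A^{\sigma_{0}}_{J^{-1},I^{-1}}$ and then unfolding, but the bookkeeping you flag as ``delicate'' is precisely the same.
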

 \begin{proof} By definition ${\bf Stab}(I)|_{\widehat{\Or}_{J}}=A_{I,J}(\zz,\mub)$. Similarly, by (\ref{duenv}) we have
 \begin{gather*}
 {\bf Stab}'\big(J^{-1}\big)\big|_{\widehat{\Or}'_{I^{-1}}}=(-1)^{n(n-1)/2} A_{J^{-1} \cdot \sigma_{0} ,I^{-1}\cdot \sigma_{0} }\big(z_{\sigma_{0}(1)}',\dots,z_{\sigma_{n}(1)}',1/\mu'_1,\dots, 1/\mu_n'\big).
 \end{gather*}
 From the definition of $\kappa$ we obtain
 \begin{gather*}
 \kappa^{*}\big( {\bf Stab}'\big(J^{-1}\big)\big|_{\widehat{\Or}'_{I^{-1}}}\big)=
 (-1)^{n(n-1)/2} A_{J^{-1}\cdot \sigma_0,I^{-1}\cdot \sigma_{0}}(\mu_{\sigma_{0}(1)},\dots,\mu_{\sigma_{0}(n)}, 1/z_1,\dots, 1/z_n).
 \end{gather*}
Thus, the statement is equivalent to Theorem \ref{duthrm}.
\end{proof}

Our Definition \ref{maindefin} of $3$d mirror symmetry then implies:

\begin{Corollary}The variety $X'$ is a $3$d mirror of $X$.
\end{Corollary}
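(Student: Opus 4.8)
The plan is to reduce the theorem directly to the already-established algebraic identity of Theorem~\ref{duthrm}, using only the definitions of the two stable envelopes and of the identification $\kappa$. First I would record what each side of the claimed equality literally is. By the relation \eqref{envweit} between weight functions and stable envelopes, and by the definition \eqref{resmat}--\eqref{fpres} of the restriction matrix, the left-hand side is simply
\begin{gather*}
{\bf Stab}(I)\big|_{\widehat{\Or}_{J}} = c_X^*W_{\id,I}(\ttt,\zz,\hbar,\mub)\big|_{\zz_J} = A_{I,J}(\zz,\mub).
\end{gather*}
For the right-hand side I would unwind \eqref{duenv}: ${\bf Stab}'(J^{-1}) = (-1)^{n(n-1)/2}\,c_{X'}^*W_{\sigma_0,J^{-1}}(\ttt',\zz',\hbar',1/\mub')$, and then restrict to $\widehat{\Or}'_{I^{-1}}$. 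The only slightly delicate point here is bookkeeping: the weight function $W_{\sigma_0,I}$ is defined as $W_{\sigma_0^{-1}(I)}(\ttt,\zz_{\sigma_0},\dots)$, so restricting $W_{\sigma_0,J^{-1}}$ to the orbit of $I^{-1}$ produces, after matching the Chern-root substitutions \eqref{restr} against the reordering by $\sigma_0$, exactly the matrix entry $A_{J^{-1}\cdot\sigma_0,\,I^{-1}\cdot\sigma_0}$ evaluated at the permuted-and-inverted arguments $(z'_{\sigma_0(1)},\dots,z'_{\sigma_0(n)},1/\mu'_1,\dots,1/\mu'_n)$. This is the computation carried out in the displayed lines of the proof.

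Next I would apply $\kappa^*$. Since $\kappa$ sends $\mu'_i\mapsto z_i$, $z'_i\mapsto\mu_i$, $\hbar'\mapsto\hbar$, the arguments $(z'_{\sigma_0(1)},\dots,z'_{\sigma_0(n)},1/\mu'_1,\dots,1/\mu'_n)$ become $(\mu_{\sigma_0(1)},\dots,\mu_{\sigma_0(n)},1/z_1,\dots,1/z_n)$. So the right-hand side equals
\begin{gather*}
(-1)^{n(n-1)/2}\,A_{J^{-1}\cdot\sigma_0,\,I^{-1}\cdot\sigma_0}\big(\mu_{\sigma_0(1)},\dots,\mu_{\sigma_0(n)},1/z_1,\dots,1/z_n\big).
\end{gather*}
Comparing with the left-hand side $A_{I,J}(\zz,\mub)$, the required equality is precisely the identity \eqref{mainsym} of Theorem~\ref{duthrm}, so the proof concludes by invoking that theorem. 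The Corollary then follows immediately: conditions (1) and (2) of Definition~\ref{maindefin} are the bijection $\textsf{bj}$ and the isomorphism $\kappa$ already fixed, and condition (3), the identity \eqref{acoin}, is exactly what the Theorem just proved (with the normalized stable envelopes, which are the sections whose restriction matrices are the $A_{I,J}$).

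The genuinely hard part is not in this final reduction at all — it has already been done. The substance is Theorem~\ref{duthrm}, whose proof rests on the two independent recursions for the restriction matrix: the $R$-matrix relations \eqref{rmatrel} (from \cite{RTV}) and the dual/Bott--Samelson relations \eqref{dualrel} (from \cite{RW}), together with the two uniqueness statements Lemma~\ref{reclem} and Lemma~\ref{dulem} and the symmetry Lemma~\ref{pprop} of the diagonal entries. Thus if I were writing this from scratch, the real work would be (a) verifying that the \cite{RW} weight functions ${\bf w}_I$ differ from the $W_I$ of \eqref{WI2} by the explicit scalar/line-bundle factor quoted in the proof of Theorem~\ref{duthm}, so that the Bott--Samelson recursion of \cite{RW} translates into \eqref{dualrel} with the twisted Felder $R$-matrix \eqref{dualR}; and (b) checking in Proposition~\ref{prop1} that the index substitution $J^{-1}\cdot\sigma_0\mapsto I$, $I^{-1}\cdot\sigma_0\mapsto J$ together with $z_i\mapsto\mu_i^{-1}$, $\mu_i\mapsto z_{\sigma_0(i)}$ carries one recursion to the other — a computation that is routine but unforgiving, since a single misplaced inverse or $\sigma_0$ breaks the matching of the coefficients $a=n-J_k+1$, $b=n-J_{k+1}+1$. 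Everything downstream of Theorem~\ref{duthrm}, including the Theorem above and its Corollary, is then just the translation dictionary between $A_{I,J}$ and the stable envelopes that the paper has set up.
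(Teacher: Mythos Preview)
Your proposal is correct and follows essentially the same approach as the paper: the paper's proof of the Theorem unwinds the definitions to get $A_{I,J}(\zz,\mub)$ on the left and $(-1)^{n(n-1)/2}A_{J^{-1}\cdot\sigma_0,I^{-1}\cdot\sigma_0}(\mu_{\sigma_0},1/\zz)$ on the right after applying $\kappa^*$, then invokes Theorem~\ref{duthrm}, and the Corollary is stated as an immediate consequence of Definition~\ref{maindefin} with no separate argument. Your additional commentary on where the real work lies (Theorem~\ref{duthrm} via the two recursions and Proposition~\ref{prop1}) is accurate but goes beyond what the paper records at this point.
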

As $X\cong X'$ we say that $X$ is 3d mirror self-dual.

\section{The duality interface}
\subsection{Interpolation function}
Let us define the following combination of elliptic weight functions
\begin{gather*}
\tilde{\mathfrak{m}}(\ttt,\ttt'):=(-1)^{n(n-1)/2}\sum\limits_{I,J\in \mathfrak{S}_n} A_{I,J}^{-1}(\zz,\zz') W_{J}(\ttt,\zz,\hbar,\zz') W_{I^{-1}\cdot \sigma_{0}}(\ttt',\zz'_{\sigma_0},1/\zz).
\end{gather*}
This function interpolates the elliptic weight functions in the following sense.
\begin{Lemma} \label{motherlemma}
 \begin{gather*}
 \tilde{\mathfrak{m}}(\ttt,\zz'_{I^{-1}})= W_{I}(\ttt,\zz,\hbar,\zz'), \qquad \tilde{\mathfrak{m}}(\zz_{I^{-1}},\ttt')= (-1)^{n(n-1)/2} W_{I \cdot \sigma_{0}}(\ttt',\zz_{\sigma_0}',\hbar,1/\zz).
 \end{gather*}
\end{Lemma}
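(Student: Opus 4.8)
### Proof proposal for Lemma~\ref{motherlemma}

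The plan is to verify each of the two interpolation identities by substituting the relevant restriction into the definition of $\tilde{\mathfrak{m}}(\ttt,\ttt')$ and collapsing the resulting sum using the triangularity and invertibility of the restriction matrix $A_{I,J}$. First I would treat the identity $\tilde{\mathfrak{m}}(\ttt,\zz'_{I^{-1}})= W_{I}(\ttt,\zz,\hbar,\zz')$. By the definition of the restriction (\ref{fpres}), substituting $\ttt' \mapsto \zz'_{I^{-1}}$ into the factor $W_{I^{-1}\cdot \sigma_{0}}(\ttt',\zz'_{\sigma_0},1/\zz)$ produces exactly the restriction matrix element: one needs to match the conventions so that $W_{I^{-1}\cdot\sigma_0}(\ttt',\zz'_{\sigma_0},\hbar,1/\zz)\big|_{\ttt'=\zz'_{I^{-1}}}$ equals, up to the recorded sign $(-1)^{n(n-1)/2}$, the entry $A_{?,?}$ of the restriction matrix evaluated at the swapped/inverted arguments appearing in (\ref{duenv}). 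Concretely, using $W_{\sigma_0,K}(\ttt',\zz',\hbar,\mub') = W_{\sigma_0^{-1}(K)}(\ttt',\zz'_{\sigma_0},\hbar,\mub')$ and the relabeling in Theorem~\ref{duthrm} / Proposition~\ref{prop1}, this restriction becomes the matrix $A^{-1}$-paired partner of $A_{I,J}(\zz,\zz')$.

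The key mechanism is then the following: after substitution, $\tilde{\mathfrak{m}}(\ttt,\zz'_{I^{-1}})$ becomes
\begin{gather*}
(-1)^{n(n-1)/2}\sum_{K,J} A_{K,J}^{-1}(\zz,\zz')\, W_{J}(\ttt,\zz,\hbar,\zz')\, \big[(-1)^{n(n-1)/2} A_{K,I}(\zz,\zz')\big]
= \sum_{J}\Bigg(\sum_{K} A_{K,I}(\zz,\zz') A_{K,J}^{-1}(\zz,\zz')\Bigg) W_{J}(\ttt,\zz,\hbar,\zz'),
\end{gather*}
so that the inner sum is the matrix identity $\sum_K (A^T)_{I,K}\,(A^{-1})_{K,J}$... — here I must be careful with the placement of indices on $A^{-1}$: the definition writes $A^{-1}_{I,J}$, and the bracketed factor produces $A_{K,I}$, so the contraction is over the \emph{first} index of $A$ against the \emph{first} index of $A^{-1}$, i.e.\ it is $(A^{-1} A^T)$ or $(A^T A^{-1})$ depending on the convention. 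The right convention — which must hold for the Lemma to be true — is that this contraction yields $\delta_{I,J}$, collapsing the sum to $W_I(\ttt,\zz,\hbar,\zz')$. Thus the verification reduces to (a) correctly identifying the restriction $W_{I^{-1}\cdot\sigma_0}(\ttt',\zz'_{\sigma_0},\hbar,1/\zz)\big|_{\ttt'=\zz'_{I^{-1}}}$ with the stated entry of the restriction matrix, using the fixed-point substitution rule (\ref{fpres}) together with the $\zz_{\sigma}$-twist convention $W_{\sigma,I}=W_{\sigma^{-1}(I)}(\ttt,\zz_\sigma,\cdot)$, and (b) reading off the index contraction correctly so that it is genuinely $A A^{-1}=\mathrm{Id}$.

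For the second identity $\tilde{\mathfrak{m}}(\zz_{I^{-1}},\ttt')= (-1)^{n(n-1)/2} W_{I \cdot \sigma_{0}}(\ttt',\zz_{\sigma_0}',\hbar,1/\zz)$, I would proceed symmetrically: substitute $\ttt\mapsto \zz_{I^{-1}}$ into $W_J(\ttt,\zz,\hbar,\zz')$, which by (\ref{fpres}) and the definition of $A^\sigma_{I,J}$ (with $\sigma=\mathrm{id}$) gives $A_{J,I^{-1}}(\zz,\zz')$ — again modulo getting the index order right against the convention in \eqref{resmat} — and then collapse the sum over $J$ against $A^{-1}_{I,J}$, leaving a single surviving term $(-1)^{n(n-1)/2} W_{K^{-1}\cdot\sigma_0}(\ttt',\zz'_{\sigma_0},1/\zz)$ for the appropriate $K$, which by relabeling equals $(-1)^{n(n-1)/2}W_{I\cdot\sigma_0}(\ttt',\zz'_{\sigma_0},\hbar,1/\zz)$. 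The main obstacle I anticipate is purely bookkeeping: keeping straight the three sources of index gymnastics — the bijection $I\mapsto I^{-1}$, the longest-element twist $\cdot\,\sigma_0$, and the distinction between $A^{-1}_{I,J}$ (inverse matrix) versus $A_{I,J}$ with swapped arguments — so that the contraction genuinely telescopes to the identity matrix rather than to $A^T A^{-1}$ or some other non-identity product. I would resolve this by fixing, once and for all, that $A$ is lower-triangular and invertible (Lemma~\ref{hollem}), writing $A^{-1}$ entries with explicit index names, and checking the $n=2$ case against the explicit matrix in Section~\ref{exsection} as a sanity check before asserting the general contraction.
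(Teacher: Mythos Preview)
Your approach is exactly the paper's: substitute the fixed-point restriction into $\tilde{\mathfrak{m}}$, identify the restricted weight function as a restriction-matrix entry (for the first identity via Theorem~\ref{duthrm}, for the second directly from~\eqref{resmat}), and collapse the double sum using $A A^{-1}=\mathrm{Id}$. The paper records this as a one-line ``obvious from~\eqref{resmat} and Theorem~\ref{duthrm}''.

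The only issue is the bookkeeping you yourself flagged: your intermediate claim that the restriction yields $(-1)^{n(n-1)/2}A_{K,I}(\zz,\zz')$ has the indices transposed, which is precisely why your contraction $\sum_K A_{K,I}\,A^{-1}_{K,J}$ fails to be $\delta_{I,J}$. The correct identification is
\[
W_{K^{-1}\cdot\sigma_0}\big(\zz'_{I^{-1}},\zz'_{\sigma_0},\hbar,1/\zz\big)
= A_{K^{-1}\cdot\sigma_0,\,I^{-1}\cdot\sigma_0}(\zz'_{\sigma_0},1/\zz)
=(-1)^{n(n-1)/2}\,A_{I,K}(\zz,\zz'),
\]
where the first equality is~\eqref{resmat} (with equivariant parameters $\zz'_{\sigma_0}$, noting that symmetry of $W$ in each $\ttt^{(k)}$ means only the \emph{set} $\{I^{-1}_1,\dots,I^{-1}_k\}$ matters, and this set equals $\{\sigma_0((I^{-1}\cdot\sigma_0)_1),\dots\}$), and the second is Theorem~\ref{duthrm}. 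With $A_{I,K}$ in place of $A_{K,I}$, the inner sum becomes $\sum_K A_{I,K}\,A^{-1}_{K,J}=(AA^{-1})_{I,J}=\delta_{I,J}$, which is the honest matrix inverse and gives $W_I(\ttt,\zz,\hbar,\zz')$ as desired. Your second identity is already correct as written: $W_J(\zz_{I^{-1}},\zz,\hbar,\zz')=A_{J,I^{-1}}(\zz,\zz')$ and $\sum_J A^{-1}_{K,J}A_{J,I^{-1}}=\delta_{K,I^{-1}}$ leaves $(-1)^{n(n-1)/2}W_{I\cdot\sigma_0}(\ttt',\zz'_{\sigma_0},\hbar,1/\zz)$.
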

\begin{proof}
 Obvious from the definition of restriction matrix (\ref{resmat}) and Theorem~\ref{duthrm}.
\end{proof}

Let us consider the scheme $\mathsf{S}(X)\times \mathsf{S}(X') \times \cE_{\bT\times \bT'}$. As before, we assume that the coordinates on
$\mathsf{S}(X)$ are symmetric functions in Chern roots $\ttt$ and coordinates on $\mathsf{S}(X')$
are symmetric functions in $\ttt'$. By definition, $\tilde{\mathfrak{m}}(\ttt,\ttt')$ is symmetric function in $\ttt$ and $\ttt'$. Therefore, it represents a section of certain line bundle on this scheme.
\subsection{Interpolation function as a section of a line bundle}
We would like to rewrite the statement of the previous lemma in geometric terms.
For a fixed point $L \in (X')^{\bT'}$ we denote by $\alpha'_{L}$ the composition of the following maps
\begin{gather*}
\cE_{\bT}\times \cE_{{\rm Pic}(X)}\times \textsf{S}(X) \rightarrow \cE_{\bT'}\times \cE_{{\rm Pic}(X')}\times \textsf{S}(X)\cong \widehat{\Or}'_{L} \times \textsf{S}(X)
\stackrel{e_{L}}{\rightarrow} E_{\bT'}(X')\times \textsf{S}(X)\\
\hphantom{\cE_{\bT}\times \cE_{{\rm Pic}(X)}\times \textsf{S}(X)}{}
 \stackrel{c_{X'}}{\longrightarrow} \textsf{S}(X)\times \textsf{S}(X') \times \cE_{\bT'}\times \cE_{{\rm Pic}(X')}\rightarrow \textsf{S}(X)\times \textsf{S}(X') \times \cE_{\bT\times \bT'},
\end{gather*}
where the first and the last maps are given by $\kappa$ (just a change of variables), $e_{L}$ is the inclusion of the extended orbit $\widehat{\Or}'_{L}$ to the extended cohomology $E_{\bT'}(X')$ (\ref{duext}) and $c_{X'}$ is the elliptic Chern class for~$X$. We denote by
\begin{gather*}
\alpha_{L}\colon \ \cE_{\bT'}\times \cE_{{\rm Pic}(X')}\times \textsf{S}(X')\longrightarrow \textsf{S}(X)\times \textsf{S}(X') \times \cE_{\bT\times \bT'}
\end{gather*}
the map given by the same chain of maps with $X'$ in place of $X$. Lemma~\ref{motherlemma} can be formulated as follows
\begin{Lemma} \label{allem}
 \begin{gather*}
 \alpha^{'*}_{L^{-1}} (\tilde{\mathfrak{m}})= W_{L}(\ttt,\zz,\hbar,\mub), \qquad \alpha^{*}_{L^{-1}} (\tilde{\mathfrak{m}})=(-1)^{n(n-1)/2} W_{L\cdot\sigma_0}(\ttt',\zz'_{\sigma_0},\hbar,1/\mub').
 \end{gather*}
\end{Lemma}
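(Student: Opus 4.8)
The plan is to reduce Lemma~\ref{allem} to Lemma~\ref{motherlemma} by unwinding the geometric maps $\alpha_L$ and $\alpha'_L$ into the explicit coordinate substitutions they perform. First I would recall that the elliptic Chern class embedding $c_{X'}$ sends $\textsf{E}_{\bT'}(X')$ into $\textsf{S}(X')\times\cE_{\bT'}\times\cE_{{\rm Pic}(X')}$ in such a way that, restricted to the orbit $\widehat{\Or}'_{L}$, the Chern roots $\ttt'$ of the tautological bundles get specialized according to the linear equations (\ref{restr})--(\ref{fpres}); that is, $e_L$ followed by $c_{X'}$ is precisely the fixed-point restriction $\ttt'\mapsto \zz'_L$ of Section~\ref{notesec}. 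This is the content of the GKM/Kirwan-surjectivity picture built in Section~2: a section of a line bundle on $\textsf{E}_{\bT'}(X')$, written via the weight function $W$, restricts to the orbit $\widehat{\Or}'_{L}$ by the substitution recorded in (\ref{fpres}).

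Next I would track the two $\kappa$-type changes of variables bracketing the composition in the definition of $\alpha'_L$. The first map identifies $\cE_{\bT}\times\cE_{{\rm Pic}(X)}$ with $\cE_{\bT'}\times\cE_{{\rm Pic}(X')}$ — in coordinates $z_i\mapsto \mu'_i$, $\mu_i\mapsto z'_i$, $\hbar\mapsto\hbar'$, the inverse of (\ref{kap}) — and the last map reverses this on the target, so that the composite $\alpha'_L$, evaluated on a function of $(\ttt,\zz,\hbar,\mub)$, amounts to: pull back $\tilde{\mathfrak{m}}(\ttt,\ttt')$, substitute $\ttt'\mapsto$ (restriction to $\widehat{\Or}'_{L}$ in the $X'$ variables), then translate the $X'$-parameters back to $X$-parameters via $\kappa$. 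Carefully bookkeeping the indices, when $L=L^{-1}$ is replaced by $L^{-1}$ this produces exactly the substitution $\ttt'\mapsto \zz'_{I^{-1}}$ appearing in the first formula of Lemma~\ref{motherlemma} (with $\zz'$ renamed as the K\"ahler variables $\mub$ of $X$ under $\kappa$), giving $\alpha'^{*}_{L^{-1}}(\tilde{\mathfrak{m}})=W_L(\ttt,\zz,\hbar,\mub)$. The same bookkeeping applied to $\alpha_L$ — which is the identical chain with the roles of $X$ and $X'$ swapped — yields the substitution $\ttt\mapsto \zz_{L^{-1}}$, and the second half of Lemma~\ref{motherlemma} then delivers $\alpha^{*}_{L^{-1}}(\tilde{\mathfrak{m}})=(-1)^{n(n-1)/2}W_{L\cdot\sigma_0}(\ttt',\zz'_{\sigma_0},\hbar,1/\mub')$.

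The step I expect to be the main obstacle is purely notational rather than conceptual: matching up the chain of isomorphisms in the definitions of $\alpha_L$ and $\alpha'_L$ with the concrete variable conventions of Section~\ref{notesec}, in particular making sure that the two $\kappa$-twists at the ends of the composition, the fixed-point label $L$ versus $L^{-1}$, and the appearance of $\sigma_0$ and of $1/\zz$ versus $\zz_{\sigma_0}$ in Lemma~\ref{motherlemma} all line up with no stray inversion or permutation. Once the dictionary ``$e_L$ then $c_{X'}$ $=$ substitution (\ref{fpres})'' and ``the bracketing maps $=$ the change of variables $\kappa$'' is made precise, the identities of Lemma~\ref{allem} are literally a restatement of Lemma~\ref{motherlemma}, so the proof is a one-line appeal: \emph{this is Lemma~\ref{motherlemma} rewritten in geometric language via the embeddings $c_X$, $c_{X'}$ and the isomorphism $\kappa$ of (\ref{ident})}. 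I would therefore write the proof as exactly that reduction, spelling out only the identification of $e_L\circ c_{X'}$ with the fixed-point specialization and leaving the index bookkeeping to the reader as routine.
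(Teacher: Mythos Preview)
Your proposal is correct and matches the paper's own proof essentially line for line: the paper simply observes that $(c_{X'}\circ e_L)^*$ is the substitution $\ttt'=\zz'_L$, says the same holds for $\alpha_L^*$, and then invokes Lemma~\ref{motherlemma} after the change of variables by~$\kappa$. Your anticipated ``one-line appeal'' is exactly how the paper writes it.
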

\begin{proof} The map $(c_{X'} \circ e_L)^*$ in $\alpha^{'*}_{L}$ is the restriction of a section to the orbit $\widehat{\Or}'_{L}$. By definition, it is given by a substitution $\ttt'=\zz_{L}'$. The same for $\alpha^{*}_{L}$. The result follows from the Lemma~\ref{motherlemma} after the change of variables by $\kappa$.
\end{proof}

\subsection{The duality interface}
Let us consider a $\bT\times \bT'$-variety $X\times X'$. For fixed points $I\in X^{\bT}$, $J\in (X')^{\bT'}$ we consider the equivariant embeddings
\begin{gather} \label{embeds}
X\times \{ J \} \stackrel{i_{J}}{\longrightarrow} X\times X' \stackrel{i_{I}}{\longleftarrow} \{I\}\times X' .
\end{gather}
We have
\begin{gather*}
{\rm Ell}_{\bT\times\bT'}(X\times \{ J \})={\rm Ell}_{\bT}(X)\times \cE_{\bT'}\cong \mathsf{E}_{\bT}(X),
\end{gather*}
where the last equality is by (\ref{ident}). Similarly, we use (\ref{ident})
to fix the isomorphism ${\rm Ell}_{\bT\times\bT'}(\{I\}\times X')\cong\mathsf{E}_{\bT'}(X')$. By covariance of the equivariant elliptic cohomology functor, the maps (\ref{embeds}) induce the following embeddings
\begin{gather*}
\mathsf{E}_{\bT}(X) \stackrel{i^{*}_{J}}{\longrightarrow} {\rm Ell}_{\bT\times\bT' }(X\times X' )\stackrel{i^{*}_{I}}{\longleftarrow} \mathsf{E}_{\bT'}(X').
\end{gather*}

\begin{Theorem} There exists a holomorphic section ${\mathfrak{m}}$ $($the \emph{duality interface}\footnote{In the previous paper~\cite{RVSZ}, it is called the \emph{Mother function}.}$)$ of a certain line bundle on ${\rm Ell}_{\bT\times\bT' }(X\times X' )$ such that
 \begin{gather*}
 (i^{*}_{J})^*({\mathfrak{m}})= {\bf Stab}(I), \qquad (i^{*}_{I})^*({\mathfrak{m}})= {\bf Stab}' (J),
 \end{gather*}
 where $I$ is a fixed point on $X$ and $J$ is the corresponding fixed point on $X'$ $($i.e., $J=I^{-1}$ as a~permutation$)$.
\end{Theorem}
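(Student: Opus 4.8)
The plan is to construct the section $\mathfrak{m}$ explicitly as (the image under elliptic Chern classes of) the interpolation function $\tilde{\mathfrak{m}}(\ttt,\ttt')$ introduced in the previous subsection, and then verify the two restriction properties by unwinding the maps $i_J^*$ and $i_I^*$ in terms of the maps $\alpha'_L$ and $\alpha_L$ of Lemma~\ref{allem}. Concretely, I would first observe that since $X\times X'$ is a product of GKM varieties with respect to $\bT\times\bT'$, it is itself GKM, so $\Ell_{\bT\times\bT'}(X\times X')$ embeds (via Kirwan surjectivity for the product, i.e., the Chern roots $\ttt$ of tautological bundles on the $X$-factor together with the Chern roots $\ttt'$ on the $X'$-factor) into $\mathsf{S}(X)\times\mathsf{S}(X')\times\cE_{\bT\times\bT'}$. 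Under the identification~(\ref{ident}) this is precisely the scheme on which $\tilde{\mathfrak{m}}(\ttt,\ttt')$ was shown to be a section of a line bundle, so I would \emph{define} $\mathfrak{m}$ to be the pullback of that section along this embedding; the fact that $\tilde{\mathfrak{m}}$ is symmetric in $\ttt$ and in $\ttt'$ separately is exactly what is needed for it to descend to a well-defined section on $\Ell_{\bT\times\bT'}(X\times X')$.

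Next I would identify the two embeddings $i_J^*\colon \mathsf{E}_\bT(X)\hookrightarrow \Ell_{\bT\times\bT'}(X\times X')$ and $i_I^*\colon \mathsf{E}_{\bT'}(X')\hookrightarrow \Ell_{\bT\times\bT'}(X\times X')$ with the maps $\alpha'_{L}$ and $\alpha_{L}$ for the appropriate fixed points. The point is that the inclusion of $X\times\{J\}$ into $X\times X'$ on elliptic cohomology is, by covariance, the inclusion $\Ell_\bT(X)\times\{[J]\}$ where $[J]\in\cE_{\bT'}$ is the point representing the fixed point $J$ in $\widehat{\Or}'_J$; combined with the Chern-class embedding $c_{X'}$ restricted to the orbit $\widehat{\Or}'_J$ (which is the substitution $\ttt'=\zz'_J$) and the change of variables $\kappa$, this is exactly the map $\alpha'_{J}$ up to relabelling. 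Taking $J=I^{-1}$ so that $L^{-1}=J$ means $L = J^{-1} = I$, pulling $\mathfrak{m}$ back along $i_J^*$ gives $\alpha'^{*}_{I^{-1}}(\tilde{\mathfrak{m}})=W_I(\ttt,\zz,\hbar,\mub)$ by Lemma~\ref{allem}, and by the Theorem identifying weight functions with elliptic stable envelopes (equation~(\ref{envweit})) this is $c_X^*W_{\id,I}={\bf Stab}(I)$. Similarly, pulling back along $i_I^*$ and using the second identity of Lemma~\ref{allem} with the relabelling $L\cdot\sigma_0 = J$ (equivalently $L=J\cdot\sigma_0$, and since $I=J^{-1}$ one checks $L^{-1}=I$ matches up), we get $(-1)^{n(n-1)/2}W_{\sigma_0, J}(\ttt',\zz',\hbar,1/\mub')$, which by definition~(\ref{duenv}) of ${\bf Stab}'$ equals $c_{X'}^*$ of that, i.e.\ ${\bf Stab}'(J)$.

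The holomorphy of $\mathfrak{m}$ needs a short separate argument: $\tilde{\mathfrak{m}}$ as written involves $A^{-1}_{I,J}$, which a priori has poles along the theta-divisors of the diagonal entries~(\ref{diagonal}); so I would argue that although each summand is only meromorphic, the total sum $\tilde{\mathfrak{m}}$ is holomorphic — this follows because $\tilde{\mathfrak{m}}$ is characterized, via Lemma~\ref{motherlemma}, as the unique section (of the relevant line bundle, which one reads off from the weight functions) whose restrictions to all the coordinate orbits $\ttt'=\zz'_{I^{-1}}$ are the holomorphic sections $W_I(\ttt,\zz,\hbar,\zz')$, together with the analogous condition in $\ttt$; since the $\widehat{\Or}_I$ (resp.\ $\widehat{\Or}'_I$) cover $\mathsf{E}_\bT(X)$ (resp.\ $\mathsf{E}_{\bT'}(X')$) as $I$ ranges over $\mathfrak{S}_n$, and $\Ell_{\bT\times\bT'}(X\times X')$ is (set-theoretically) the union of the products of these orbits, a section that is holomorphic after restriction to every such orbit-product is holomorphic. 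The main obstacle I anticipate is precisely this bookkeeping of line bundles and the verification that the combinatorial identifications $L=I$ versus $L=J\cdot\sigma_0$, together with the two occurrences of $\kappa$ (once on each factor) and the inversion on fixed points $\textsf{bj}(I)=I^{-1}$, all line up so that both restriction formulas come out with the correct indices and the correct sign $(-1)^{n(n-1)/2}$ — everything here is ``morally'' a restatement of Lemma~\ref{allem}, but getting the permutation labels and the parameter substitutions consistent across the product geometry is where the care is needed.
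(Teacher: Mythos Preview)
Your proposal follows essentially the same approach as the paper: define $\mathfrak{m}$ as the pullback $c^*(\tilde{\mathfrak{m}})$ along the Chern-class embedding $c\colon \Ell_{\bT\times\bT'}(X\times X')\to \mathsf{S}(X)\times\mathsf{S}(X')\times\cE_{\bT\times\bT'}$, factor each $i_I^*$ through the corresponding $\alpha_I$ and $c_{X'}$, and invoke Lemma~\ref{allem} together with~(\ref{envweit}) and~(\ref{duenv}) to identify the two pullbacks with $\mathbf{Stab}(I)$ and $\mathbf{Stab}'(J)$.

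The only place where the paper is more direct is the holomorphy argument: rather than reasoning that $\tilde{\mathfrak{m}}$ restricts to holomorphic weight functions on each single-factor orbit and then further to restrictions, the paper simply observes that $\mathfrak{m}|_{\Or_{I,J}}=A_{I,J}(\zz,\zz')$ on every product orbit, and these are holomorphic by Lemma~\ref{hollem}; since the $\Or_{I,J}$ cover $\Ell_{\bT\times\bT'}(X\times X')$, that suffices. Your argument reaches the same conclusion but through one extra layer. Also, in your second relabelling you write $L\cdot\sigma_0=J$ with $L^{-1}=I$, but $L=J\cdot\sigma_0$ gives $L^{-1}=\sigma_0\cdot J^{-1}=\sigma_0\cdot I\neq I$; the correct match is $L^{-1}=I$ (so $L=I^{-1}=J$) yielding $W_{J\cdot\sigma_0}$, which is what $W_{\sigma_0,J}$ unpacks to after the definition $W_{\sigma,I}=W_{\sigma^{-1}(I)}(\ttt,\zz_\sigma,\ldots)$ --- exactly the bookkeeping issue you anticipated.
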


\begin{proof} Let
 \begin{gather*}
 {\rm Ell}_{\bT\times\bT'}(X\times X' )\stackrel{c}{\longrightarrow} \textsf{S}(X)\times \textsf{S}(X') \times \cE_{\bT\times \bT'}
 \end{gather*}
 be the embedding by the elliptic Chern classes. Define ${\mathfrak{m}}=c^{*}(\tilde{{\mathfrak{m}}})$.
 For $I\in X^{\bT}$ we can factor the inclusion map as $i^{*}_{I}=\alpha_{I}\circ c_{X'}$ where
 $ c_{X'}\colon E_{\bT}(X') \rightarrow \mathsf{S}(X')\times \cE_{\bT'}\times \cE_{{\rm Pic}(X')} $
 the elliptic Chern classes of tautological bundles over $X'$. Thus,
 \begin{gather*}
 (i^{*}_{I})^{*}(\mathfrak{m})=c_{X'}^{*} \circ \alpha^{*}_{I} (\tilde{\mathfrak{m}}) = c_{X'}^{*} (W_{I^{-1}\cdot \sigma_0}(\ttt',\zz,\hbar,1/\mub'))={\bf Stab}'(I^{-1})={\bf Stab}'(J),
 \end{gather*}
 where the second equality is by Lemma \ref{allem} and the third is by (\ref{duenv}). The calculation for a fixed point on $J \in (X')^{\bT'}$ is the same.

 Finally, by definition, ${\mathfrak{m}}$ is holomorphic if every restriction $ {\mathfrak{m}} |_{\Or_{I,J}}$ is holomorphic. But
 \begin{gather*}
 {\mathfrak{m}} |_{\Or_{I,J}}=A_{I,J}(\zz,\zz'),
 \end{gather*}
 which is holomorphic by Lemma \ref{hollem}.
\end{proof}

\subsection*{Acknowledgments}

The authors would like to thank M.~Aganagic and A.~Okounkov for their insights on 3d mirror symmetries and elliptic stable envelopes that motivates this work. We thank I.~Cherednik for his interest in this work and useful comments. R.R.~is supported by the Simons Foundation grant 523882. A.S.~is supported by RFBR grant 18-01-00926 and by AMS travel grant. A.V.~is supported in part by NSF grant DMS-1665239. Z.Z.~is supported by FRG grant 1564500.

\pdfbookmark[1]{References}{ref}
\LastPageEnding

\end{document}